\def\C{{\mathbb C}}
\def\R{{\mathbb R}}
\def\Z{{\mathbb Z}}
\def\N{{\mathbb N}}
\DeclareMathOperator{\real}{{\rm Re}}
\DeclareMathOperator{\imag}{{\rm Im}}
\newtheorem{teo}{Theorem}
\newtheorem{defi}{Definition}
\newtheorem{lema}{Lemma}
\newtheorem{prop}{Proposition}
\newtheorem{remark}{Remark}
\begin{document}

\markboth{A. Bel and W. Reartes and A. Torresi}{Bifurcations in DDEs: an algorithmic approach in frequency domain}

\title[Bifurcations in DDEs: an algorithmic approach in frquency domain]{Bifurcations in Delay Differential Equations: an algorithmic approach in frequency domain}

\author{A. Bel}
\author{W. Reartes}
\author{A. Torresi}
\address{Departamento de Matem\'atica\\
        Universidad Nacional del Sur\\ 
	Av. Alem 1254, 8000 Bah\'{\i}a Blanca, Argentina. }
\email{andrea.bel@uns.edu.ar, walter.reartes@gmail.com}
\maketitle
\begin{abstract}
In this work we study local oscillations in delay differential equations with a frequency domain methodology. The main result is a bifurcation equation from which the existence and expressions of local  periodic solutions can be determined. We present an iterative method to obtain the bifurcation equation up to a fixed arbitrary order. It is shown how this method can be implemented in symbolic math programs.
\end{abstract}

\keywords{Delay differential equations -- Frequency domain -- Bifurcations of periodic orbits}


%
\section{Introduction}
A frequency domain approach to study  bifurcations of local periodic solutions of differential equations was initially  presented in \cite{mees79a,mees79b, moiola96}. The notation and main results for this methodology are developed in detail in \cite{mees81}. The frequency domain method, in its different implementations, combines theory of feedback control systems, harmonic balance method and Nyquist stability criterion to find local oscillations in systems of ordinary differential equations. These methods have proven to be useful and interesting for engineers working in control theory.

Some original results in the study of oscillatory solutions for delay differential equations, applying the frequency domain methodology, were presented in the book of Moiola and Chen [1996].
Thereafter, this theory was applied and extended \cite{itovich09} and generalized in \cite{gentile12}, to include more general delayed systems. All these results, however, only consider a harmonic balance approximation up to eight order. Various applications of the frequency domain methodology can be found in \cite{gentile14,yucao07, yu08}.

In the framework of the frequency domain we consider
an harmonic balance of a given order, use a reduction method and obtain an algorithmic process to find coefficients for a bifurcation equation of periodic solutions up to the given order. To analyze this bifurcation equation we apply singularity theory \cite{golubitsky85}. This allows us to classify different bifurcation scenarios of periodic solutions. In particular, we characterize regions of the parameter space in which there are multiplicity of periodic solutions.

In Section \ref{delayed} we reformulate a delayed systems with an input-output representation and we propose a transfer function suitable for applications in the theory of linear feedback systems \cite{mees81}. In Subsection \ref{higher} we prove the main result of the paper and obtain a bifurcation equation for local periodic orbits of general delayed systems. Also, we present an algorithm to find the coefficients of the bifurcation equation and an approximation of periodic solutions up to a fixed order. The algorithm can be implemented in symbolic computation programs such as Mathematica \cite{wolfram} or {\sc Maxima} \cite{maxima}. In Section \ref{analysis}, we show how to analyze the bifurcation equation using tools of singularity theory, and we determine different conditions to find all cycle bifurcations of codimension less than or equal to two. Finally, in Section \ref{examples} we illustrate the proposed method by means of two well-known examples. In our opinion these examples show the potentiality of the proposed methodology.

\section{Delayed systems in the frequency domain}\label{delayed}

We consider an autonomous $n$-dimensional non-linear system of the form
\begin{equation}\label{dfsist0}
x'(t)=f(x(t), x(t-\tau),\mu),
\end{equation}
where $x(t)\in \mathbb{R}^n,$ $x'=dx/dt,$ $\tau>0$ is a time delay, $\mu\in \R$ is the bifurcation parameter and $f:\mathbb{R}^{2n}\times\mathbb{R}\to \mathbb{R}^n$ is a non-linear $C^k$ function with $k>3.$ The system could have more parameters which we call auxiliary.

In an input-output representation we can write the previous system as the following feedback system \cite{mees81}
\begin{equation}\label{dfsist1}
\left\{ \begin{array}{ccl}
x'(t) & = & A_0 \, x(t) + A_1 \, x(t-\tau) + B \, g(y(t),y(t-\tau),\mu),\\
y(t) & = & - C x(t),\\
\end{array} \right.
\end{equation}
with $y(t)\in \R^m,$ $A_{0,1}\in \mathbb{R}^{n\times n},$ $B\in \mathbb{R}^{n\times p},$ $C\in \mathbb{R}^{m\times n},$ and the function $g:\mathbb{R}^{2 m}\times \mathbb{R}\to \mathbb{R}^p$ is defined such that the following holds
\begin{equation}
f(x(t),x(t-\tau),\mu) = A_0 x(t)+ A_1 x(t-\tau) + B\, g(-C x(t), -C x(t-\tau),\mu).
\end{equation}

Here, the value of $g$ represents an input variable which depends on the output variable $y$ and the bifurcation parameter $\mu.$ If $m \le n,$ the dimension of the system is reduced, simplifying its study.

Applying the Laplace transform to system \eqref{dfsist1} and omitting the initial condition effects, we obtain the following expression in the frequency domain
\begin{equation}\label{dfslap}
\mathcal{L}[y(t)](s)= - G(s,\mu,\tau) \mathcal{L}[g(y(t),y(t-\tau),\mu)](s),
\end{equation}
where $G$ is a $m\times p$ matrix defined by
\begin{equation}
G(s,\mu,\tau) = C (sI-A_0 - A_1 e^{-s\tau})^{-1} B.
\end{equation}
$G$ is the transfer function associated to the realization $(A_0,A_1,B,C).$ The realization must be controllable and observable, that is, it must be minimal \cite{morse76,sename01}.

\begin{remark}
The function $g$ contains non-linearities of the system, but it may also contain linear terms related to different realizations.
\end{remark}

With the new representation, the state variable $x$ is deleted and the system is described with input and output variables only. This has advantages when we work with systems of large dimensions but with small number of inputs and outputs. The problem using feedback approach results in a fixed point problem \cite{mees81}.

The equilibria $\hat{y}$ of system \eqref{dfsist1} verify the equation
\begin{equation}
\mathcal{L}[y](s)= - G(0,\mu,\tau) \mathcal{L}[g(y,y,\mu)](s).
\end{equation}
From uniqueness of the Laplace antitransform it results
\begin{equation}
y= - G(0,\mu,\tau) g(y,y,\mu).
\end{equation}
Each equilibrium $\hat{x}$ of system \eqref{dfsist0} corresponds to a solution $\hat{y}$ of the above equation.

Let $z_1$ and $z_2$ be the  first and second variables of the function $g,$ and
\begin{equation}\label{opderiv}
D_1 =\left.\frac{\partial g}{\partial z_1}\right|_{(\hat{y},\hat{y},\mu)},\quad D_2 =\left.\frac{\partial g}{\partial z_2}\right|_{(\hat{y},\hat{y},\mu)},\quad  D_{k_1 k_2}=\left.\frac{\partial^2 g}{\partial z_{k_1} \partial z_{k_2}}\right|_{(\hat{y},\hat{y},\mu)}, \ldots,
\end{equation}
for  $1\le k_1,k_2\le 2,$ the derivatives evaluated at the equilibrium $(\hat{y},\hat{y},\mu)$.
Then, if $g$ is a $C^{k}$ function, we can write its Taylor approximation $\sum_{j=0}^{k} F_j(z)$ as
\begin{equation}\label{taylor}
F_j(z)=\frac{1}{j!}\sum_{k_1,\ldots,k_j=1}^{2} D_{z_{k_1} \ldots z_{k_j}} (z_{k_1}-\hat{z}_{k_1}, \ldots, z_{k_j}-\hat{z}_{k_j}),
\end{equation}
where $z=(z_1,z_2)\in \R^{2n}.$

In particular, linearizing \eqref{dfslap} around the equilibrium $\hat{y}$ we obtain
\begin{equation}
\begin{array}{rcl}
\displaystyle\mathcal{L}[y-\hat{y}](s) &=& -\displaystyle G(s,\mu,\tau) \mathcal{L}\left[D_1 (y-\hat{y})+D_2 (y_{\tau}-\hat{y})\right](s)\vspace{.2cm}\\
&=& -\displaystyle G(s,\mu,\tau) \left( D_1 + D_2 e^{-s\tau}\right) \mathcal{L}[y-\hat{y}](s),
\end{array}
\end{equation}
From this equation we have the following definition.

\begin{defi}\label{defiftl}
The linear transfer function associated to system \eqref{dfsist1}, is the matrix of order $m\times m$ defined by
\begin{equation}\label{gj}
GJ(s,\mu,\tau)=G(s,\mu,\tau)(D_1+D_2 e^{-s\tau}),
\end{equation}
with $D_1$ and $D_2$ defined in \eqref{opderiv}.
\end{defi}

\begin{remark}
In \cite{gentile12} another transfer function is defined, its dimension is bigger than dimension of the matrix defined above.
\end{remark}

Considering this transfer function we can use the frequency domain methodology to find periodic solutions from the original system in the same way as the method used in ordinary differential equations \cite{mees81,moiola96}.

The characteristic functions $\lambda_1(s,\mu,\tau),\ldots,\lambda_r(s,\mu,\tau),$ with $1\leq r\leq m,$ are solutions of
\begin{equation}\label{detcar}
\det(\lambda I-GJ(s,\mu,\tau))=0.
\end{equation}
If a root of the characteristic equation of the non-linear system \eqref{dfsist0} associated to an equilibrium $\hat{x}$ takes the complex value  $i\omega_0$ when $\mu=\mu_0$ and $\tau= \tau_0,$ the corresponding eigenvalue of $GJ(s,\mu,\tau)$ takes the value  $-1$ when $\mu=\mu_0,$ $\tau=\tau_0$ and $s=i\omega_0.$ Thus, we have the following lemma.

\begin{lema}\label{lemma1}
The first necessary condition for the system \eqref{dfsist0} undergoes an Andronov-Hopf bifurcation around $\hat{x}$ at $\mu=\mu_0,$  $\tau=\tau_0$ with critic frequency $\omega=\omega_0\ne 0,$ is that a simple characteristic function $\hat{\lambda}(i\omega,\mu,\tau)$ of $GJ(s,\mu,\tau)$ exists, such that
\begin{equation} \label{hopfc}
\hat{\lambda}(i\omega_0,\mu_0,\tau_0)=-1.
\end{equation}
$\mu=\mu_0,$ $\tau=\tau_0$ and $\omega=\omega_0$ are called critical values.
\end{lema}

In Section \ref{analysis} we will obtain sufficient conditions to ensure the existence of Andronov-Hopf bifurcation as well as other bifurcations of periodic solutions of codimension less than or equal to two. These conditions are obtained by studying the bifurcation equation in the frequency domain. The bifurcation equation will be calculated in the following subsection.

\subsection{High order bifurcation equation}\label{higher}

We will now obtain a theorem for delay differential equations of the form \eqref{dfsist0}. It is an extension of a recent result in the case of ordinary differential equations \cite{torresi12}.

This theorem gives us a bifurcation equation of non-trivial periodic orbits that is very useful to determine the existence and expression of periodic solutions. The study of the bifurcation equation expressed in suitable coordinates and using singularity theory \cite{golubitsky85}, allows us to determine existence and multiplicity of cycles. Furthermore, the theorem gives us analytical expressions for periodic solutions.

Based on the ideas in the proof, we propose an iterative method to make high order calculations for both the bifurcation equation and the approximated periodic solutions. 

\begin{prop}\label{teodfoe}
Consider the system \eqref{dfsist0} with a minimal realization of the form \eqref{dfsist1}, $g \in C^{2q+1}$ for a fixed $q\in \N$, and the transfer function $GJ$ defined in \eqref{gj}. Let $\hat{y}$ be an equilibrium of \eqref{dfsist1} and suppose that critical values exist such that a simple characteristic function $\hat{\lambda}(s,\mu,\tau)$ of $GJ$ verifies \eqref{hopfc}. If $v$ is a eigenvector associated to $\hat{\lambda}$, then the non-trivial solutions $(v,\omega,\mu,\tau)$ near $(0,\omega_0,\mu_0,\tau_0)$ of the equation 
\begin{equation}\label{eqorig}
\phi(v,\omega,\mu,\tau)=0,
\end{equation}
are in one-to-one correspondence with non-trivial periodic solutions of the system  \eqref{dfsist0}, with frequency close to $\omega_0$ and small amplitude.

The function $\phi$ is defined in the proof of the proposition. The equation \eqref{eqorig} is a $q$-th order bifurcation equation of periodic orbits in the frequency domain.
\end{prop}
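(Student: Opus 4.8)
The plan is to set up a harmonic balance ansatz and reduce the infinite-dimensional periodic-solution problem to a finite algebraic equation via a Lyapunov–Schmidt-type projection, exploiting the simplicity of the critical characteristic function. I would begin by writing a candidate periodic solution of frequency $\omega$ near $\omega_0$ as a truncated Fourier series in the output variable, $y(t)-\hat y = \real\sum_{\ell=0}^{2q} y_\ell\, e^{i\ell\omega t}$, where the first harmonic is governed by the eigenvector, $y_1 = v + (\text{higher-order corrections})$, and $v$ lies in the one-dimensional eigenspace of $GJ(i\omega,\mu,\tau)$ for the eigenvalue $\hat\lambda$. Substituting this ansatz into the feedback relation \eqref{dfslap}, the nonlinearity $g$ is expanded using its Taylor series \eqref{taylor}, so each Fourier coefficient $y_\ell$ must satisfy a balance equation obtained by matching the $e^{i\ell\omega t}$ terms; the delay enters cleanly because $e^{-i\ell\omega\tau}$ multiplies the delayed contributions, which is precisely the structure built into $GJ$ through \eqref{gj}.

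The key step is the Lyapunov–Schmidt reduction. For every harmonic $\ell\neq 1$, the linear operator $I + G(i\ell\omega,\mu,\tau)(D_1+D_2 e^{-i\ell\omega\tau})$ is invertible near the critical values (since $\hat\lambda$ equals $-1$ only at $\ell=1$, $s=i\omega_0$, and is simple), so the corresponding balance equations can be solved for $y_\ell$ ($\ell\ne1$) as smooth functions of $(v,\omega,\mu,\tau)$ by the implicit function theorem, order by order in the amplitude. The resolved higher harmonics are then back-substituted into the $\ell=1$ balance equation. Because the first-harmonic operator is singular in exactly the direction of $v$, I would project this remaining equation onto the eigenvector: the component along $v$ (after factoring out the trivial scaling) yields the reduced equation $\phi(v,\omega,\mu,\tau)=0$, while the complementary components are already absorbed into the solved higher harmonics. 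Defining $\phi$ this way makes it the genuine bifurcation equation; its leading term is $(\hat\lambda(i\omega,\mu,\tau)+1)v$ plus amplitude-cubic corrections, which recovers Lemma~\ref{lemma1} as the lowest-order necessary condition.

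The truncation order $q$ enters through the regularity $g\in C^{2q+1}$, which guarantees that the Taylor expansion and the successive implicit-function solves can be carried out to the order needed for a $q$-th order bifurcation equation; the even harmonics do not contribute to $\phi$ at the relevant orders because of the autonomous structure and the phase-shift symmetry $t\mapsto t+\text{const}$ of the periodic problem. I would then establish the one-to-one correspondence: every genuine small-amplitude periodic solution of \eqref{dfsist0} with frequency near $\omega_0$ produces, via its Fourier coefficients, a solution $(v,\omega,\mu,\tau)$ of $\phi=0$; conversely, a zero of $\phi$ determines all higher harmonics through the resolved functions, reassembling a bona fide periodic solution of the original system. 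Uniqueness of the Laplace antitransform, used earlier for equilibria, justifies passing between the frequency-domain balance and the time-domain solution.

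The main obstacle I anticipate is the bookkeeping of the Lyapunov–Schmidt reduction in the presence of the delay and the multidimensional transfer function: one must verify invertibility of the non-resonant harmonic operators uniformly near the critical values, handle the phase degeneracy inherent to autonomous systems (the periodic orbit is determined only up to a time shift, so $\phi$ should be understood modulo this $S^1$-action), and organize the amplitude expansion so that the definition of $\phi$ is consistent to order $q$. The delay terms $e^{-i\ell\omega\tau}$ are algebraically harmless but make the frequency $\omega$ appear nonlinearly in every operator, so the smoothness and nondegeneracy checks underpinning the implicit function theorem require care rather than being automatic.
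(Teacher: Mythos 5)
Your strategy is the same as the paper's: a harmonic-balance ansatz of order $2q$, inversion of the non-resonant operators $L_j=GJ(ij\omega,\mu,\tau)+I$ for $j\neq 1$ to express the higher harmonics in terms of the first one, and a Lyapunov--Schmidt splitting of the singular $j=1$ equation whose projected component defines $\phi$. However, one step as stated is wrong and would produce an incorrect bifurcation function: the claim that ``the even harmonics do not contribute to $\phi$ at the relevant orders.'' The zeroth and second harmonics are $\mathcal{O}(\theta^2)$ in the amplitude and feed back into the first Fourier coefficient $c_1$ of the nonlinearity through the quadratic terms $D_{k_1k_2}(a_0,a_1)$ and $D_{k_1k_2}(a_2,a_{-1})$ in \eqref{coefg}; these are $\mathcal{O}(\theta^3)$, i.e.\ exactly the order of the leading nonlinear term of $\phi$. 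This is the standard mechanism producing the first curvature coefficient, and it is why the algorithm of Table~\ref{mfiter} must solve \eqref{dfsaj1} for $j=0,2$ before extracting $\xi_1$. (In the paper's first example $a_0=a_2=0$, but only because that particular cubic nonlinearity has no quadratic part.) What the $S^1$ phase symmetry actually yields is that $\phi$ is odd in the amplitude, so only even powers $\theta^{2k}$ multiply the coefficients $\xi_k$ --- a statement about powers of the amplitude, not about which harmonics enter.

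A secondary imprecision: for $m>1$ the component of the $j=1$ equation lying in ${\rm range}(\hat L)$ is not ``absorbed into the solved higher harmonics''; it is a separate equation, $(I-Q)\Phi=0$ (equation \eqref{eqfi}), which must itself be solved by the implicit function theorem for the component $v^{\bot}\in M$ of $a_1$ transverse to ${\rm ker}(\hat L)$, using that $\hat L$ restricted to $M$ is invertible onto ${\rm range}(\hat L)$. Only after substituting $v^{\bot}(v,\omega,\mu,\tau)$ into the projected equation \eqref{eqfii} does one obtain the bifurcation equation \eqref{bifred}. Your ansatz $y_1=v+(\text{corrections})$ leaves room for this, but the reduction is incomplete unless that solve is performed explicitly; it cannot be delegated to the $\ell\neq 1$ equations.
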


\begin{proof}
We consider the following approximation of order $2q$ of a periodic solution with frequency $\omega$
\begin{equation}\label{yapp}
\tilde{y}(t)= \hat{y}+\sum_{j=-2q}^{2q} a_j e^{ij\omega t},
\end{equation}
with $a_j\in \mathbb{C}^m,$ $a_{-j}={ \overline{a}_j},$ for $-2q\leq j\leq 2q$. Let $a$ be the vector defined by $a=(a_{-2q}, ..., a_{2q}).$

The function $g$ evaluated at $\tilde{y}(t),$ $\tilde{y}(t-\tau)$, is a periodic function with frequency $\omega$. To find its expansion we consider the Taylor approximation $\sum_{n=0}^{2q+1} F_n$ (see notation in \eqref{taylor}). Then we can write the following approximation of order $2q$ of the function $g$ evaluated in the approximated periodic solution
\begin{equation}
\tilde{g}(\tilde{z},\mu)= g(\hat{z},\mu)+F_1(\tilde{z})+\sum_{j=-2q}^{2q} c_j(a,\mu,\tau) e^{ij\omega t},
\end{equation}
where $\hat{z}=(\hat{y},\hat{y}),$ $\tilde{z}(t)=(\tilde{y}(t),\tilde{y}(t-\tau)),$ and $c_j$ is the $j$-th Fourier coefficient given by
\begin{equation}\label{coefg}
c_j(a,\mu,\tau) = \sum_{n=2}^{2q+1}\frac{1}{n!} \sum_{k_1,\ldots,k_n=1}^2  D_{k_1\ldots k_n} \sum_{\stackrel{\scriptstyle j_1+\cdots+j_n=j}{-2q\le j_1,...,j_n \le 2q}} e^{-i S \omega \tau} (a_{j_1},\ldots,a_{j_n}),
\end{equation}
where $S=\sum_{i=1}^{n}(k_i-1)j_i.$

Introducing $\tilde{y}$ and $\tilde{g}$ in \eqref{dfslap} and using the harmonic balance method \cite{mees79a}, we obtain the equations
\begin{equation}\label{dfajs}
\left(GJ(ij\omega,\mu,\tau)+I \right) a_j + G(ij \omega,\mu,\tau) c_j(a,\mu,\tau)=0,
\end{equation}
for $-2q\leq j \leq 2q.$ We want to solve this system for $a= (a_{-2q}, \ldots,a_{-1},a_0,a_1,\ldots, a_{2q})$ near the origin. For simplicity we define $L_j=GJ(ij\omega,\mu,\tau)+I.$

If $j\neq 1,$ from the hypothesis, the operator $L_j$ is invertible at the critical values $\omega=\omega_0,$ $\mu=\mu_0$ and $\tau=\tau_0.$ Therefore, we can ensure that $a_j$ can be computed for all $j\neq1$ in terms of $a_1$, that is, $a=a(a_1)$ for values of $\omega,$ $\mu$ and $\tau$ near the critical values.

If $j=1,$ we define
\begin{equation}
\Phi(a_1,\omega,\mu,\tau)=L_1 a_1 + G(i\omega, \mu,\tau) c_1(a(a_1),\mu,\tau).
\end{equation}
In this case, the operator $L_1$ is not invertible at the critical values. To solve the equation
\begin{equation}\label{dfeqaf1}
\Phi(a_1,\omega,\mu,\tau)=0,
\end{equation}
we will use projections in appropriate spaces.

We define the operator
\begin{equation}\label{hatl}
\hat{L}=GJ(i\omega,\mu,\tau) -\hat{\lambda}(i\omega,\mu,\tau) I.
\end{equation}
In a neighborhood of the critical values $\dim({\rm ker}(\hat{L}))=1.$ It is possible choose complementary spaces such that $\C^m={\rm ker}(\hat{L})\oplus M$ and $\C^m = N\oplus{\rm range}(\hat{L}).$

Let $Q: \mathbb{C}^m\to N$ be the projection orthogonal to ${\rm range}(\hat{L})$. The complementary projection $I-Q$ has kernel $N$ and acts as the identity on ${\rm range}(\hat{L}).$

Let $v\in {\rm ker}(\hat{L})$ and $a_1\in\mathbb{C}^m,$ then we can write $a_1=v+v^{\bot}$ for some $v^{\bot}\in M$. Thus, $\Phi(a_1,\omega,\mu,\tau)=0$ iff
\begin{eqnarray}
Q\, \Phi(v+v^{\bot},\omega,\mu,\tau) = 0,\label{eqfii} \\
(I-Q)\, \Phi(v+v^{\bot},\omega,\mu,\tau) = 0\label{eqfi}.
\end{eqnarray}

Since $\hat{L}:M\to {\rm range}(\hat L)$ is invertible and from the hypothesis $\hat{L}= GJ(i\omega_0,\mu_0,\tau_0)+I$, we can apply the implicit function theorem to ensure that equation \eqref{eqfi} has an unique solution for $v^{\bot}=v^{\bot}(v,\omega,\mu,\tau)$ near $(0,\omega_0,\mu_0,\tau_0)$.

By replacing $v^{\bot}$  in \eqref{eqfii} we have
$Q\, \Phi(v+v^{\bot}(v,\omega,\mu,\tau),\omega,\mu,\tau)=0.$ Then, the expression of the $q$-th order bifurcation equation is
\begin{equation}\label{bifred}
\phi(v,\omega,\mu,\tau)=Q L_1 (v+v^{\bot})+ Q G(i\omega, \mu,\tau)\, c_1\left(a(v+v^{\bot}),\mu,\tau\right) =0,
\end{equation}
where $v^{\bot}=v^{\bot}(v,\omega,\mu,\tau).$

Thus, in a neighborhood of  $(0,\omega_0,\mu_0,\tau_0),$ each non-trivial solution $(v,\omega,\mu,\tau)$ of the above equation corresponds to a periodic solution of the form \eqref{yapp} with small amplitude and frequency close to $\omega_0$.
\end{proof}

To solve the bifurcation equation \eqref{eqorig} it is customary to choose coordinates on the spaces ${\rm ker}(\hat L)$ and $N.$ The operator $Q$ depends on these choices, so we can obtain different expressions, but they are equivalent from the point of view of the theory of bifurcation.

In the next theorem we choose suitable coordinates on the spaces involved and we obtain from \eqref{eqorig} a reduced bifurcation equation easier to solve. 

\begin{teo}\label{teoppal}
Consider the system \eqref{dfsist0} with a minimal realization of the form \eqref{dfsist1}. Suppose that the conditions in Proposition \ref{teodfoe} are verified, and that $||v||=1.$ Then, there are functions $\xi_k=\xi_k(\omega,\mu,\tau),$ for $1\leq k\leq q$, such that the $q$-th order bifurcation equation of periodic orbits in frequency domain, expressed in coordinates, results
\begin{equation} \label{dfecbif}
\hat{\lambda}(i \omega,\mu,\tau)+1+\sum_{k=1}^{q} \theta^{2k} \xi_k(\omega,\mu,\tau)=0,
\end{equation}
for small $\theta\in\R.$

The non-zero solutions of \eqref{dfecbif} are in one-to-one correspondence with the small amplitude  periodic solutions of the system \eqref{dfsist0}.

Moreover, each solution $(\theta,\omega,\mu,\tau)$ near the critical values of the bifurcation equation above corresponds to a small amplitude periodic solution of the form
\begin{equation}\label{yexp}
y(t)= \hat{y} + \sum_{k=-2q}^{2q} a_k(\theta,\omega,\mu,\tau) e^{i k \omega t}+ \mathcal{O}(\theta^{2q+1}).
\end{equation}

The expressions of $a_k(\theta,\omega,\mu,\tau)$ for $-2q\leq k\leq 2q,$ and $\xi_k(\omega,\mu,\tau)$ for $1\leq k\leq q,$ are obtained in the proof of the theorem.
\end{teo}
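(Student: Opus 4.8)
The plan is to take the bifurcation equation $\phi(v,\omega,\mu,\tau)=0$ from Proposition \ref{teodfoe} and rewrite it in explicit one-dimensional coordinates. Since $\dim(\mathrm{ker}(\hat L))=1$, any $v\in\mathrm{ker}(\hat L)$ with an associated unit eigenvector can be written as $v=\theta\, v_0$ for a scalar $\theta\in\R$ (using the freedom of phase to make $\theta$ real), where $v_0$ is the chosen unit eigenvector of $GJ(i\omega,\mu,\tau)$ for the eigenvalue $\hat\lambda$. Likewise, since $\dim N=1$ (because $\hat L=GJ-\hat\lambda I$ has a one-dimensional range-complement when $\hat\lambda$ is simple), I would fix a coordinate on $N$ by choosing a left eigenvector $w$ of $GJ$ associated with $\hat\lambda$, normalized so that $\langle w,v_0\rangle=1$. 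The projection $Q$ onto $N$ then becomes $Q(\cdot)=\langle w,\cdot\rangle\, w$ (or, composing with the coordinate map, simply $\langle w,\cdot\rangle$), turning the vector equation \eqref{bifred} into a single scalar equation.

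Next I would evaluate the two terms of $\phi$ against this coordinate. For the linear term, note $L_1=GJ(i\omega,\mu,\tau)+I=\hat L+(\hat\lambda+1)I$; applying $\langle w,\cdot\rangle$ to $L_1 v_0=\hat L v_0+(\hat\lambda+1)v_0=(\hat\lambda+1)v_0$ (since $v_0\in\mathrm{ker}\hat L$) yields exactly $(\hat\lambda(i\omega,\mu,\tau)+1)$, using $\langle w,v_0\rangle=1$. This is the source of the $\hat\lambda+1$ appearing in \eqref{dfecbif}. The remaining contributions — the $L_1 v^\bot$ piece and the nonlinear piece $\langle w,\,G(i\omega,\mu,\tau)\,c_1(a,\mu,\tau)\rangle$ — must be organized by powers of $\theta$. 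Here I would invoke the structure of $c_1$ in \eqref{coefg}: each $c_j$ collects products of $n\ge 2$ of the $a_{j_1},\dots,a_{j_n}$, and since by the reduction $a=a(a_1)=a(\theta v_0)$ with $a_1=\theta v_0+v^\bot$ and $v^\bot=v^\bot(v,\omega,\mu,\tau)$ vanishing to order $\theta^2$, every monomial in this expansion carries a definite power of $\theta$.

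The crux is to show that only \emph{even} powers of $\theta$ survive, so that the sum truncates to $\sum_{k=1}^q \theta^{2k}\xi_k$. The mechanism is the phase equivariance of the harmonic-balance system \eqref{dfajs}: replacing $a_j\mapsto a_j e^{ij\psi}$ corresponds to a time-translation $t\mapsto t+\psi/\omega$ of the periodic solution, which is a symmetry of the autonomous system \eqref{dfsist0}. In terms of $\theta$, a sign change $\theta\mapsto -\theta$ (i.e.\ $\psi=\pi$, shifting by half a period) multiplies $a_j$ by $(-1)^j$; one checks from \eqref{coefg} that $c_1$ transforms the same way as $a_1$, so the scalar equation is odd in $\theta$ and, after factoring out the overall $\theta$, becomes even. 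This forces the odd coefficients to vanish and produces the functions $\xi_k(\omega,\mu,\tau)$ as the surviving even-order coefficients, each computed by solving for the lower $a_j$'s order by order. The one-to-one correspondence with periodic solutions and the solution expansion \eqref{yexp} then follow directly from the correspondence already established in Proposition \ref{teodfoe}, since the change of coordinates $v=\theta v_0$ is invertible away from $\theta=0$.

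I expect the main obstacle to be a clean justification that the reduction $v^\bot(v,\omega,\mu,\tau)$ inherits the parity in $\theta$ (so that no odd powers leak in through the implicit-function-theorem solution of \eqref{eqfi}), and the careful bookkeeping needed to show that the $\theta$-expansion of $\langle w, G\,c_1\rangle$ truncates exactly at order $2q$ consistently with the order-$2q$ truncation in \eqref{yapp}. Verifying the equivariance of $v^\bot$ amounts to checking that equation \eqref{eqfi} is itself equivariant under $\theta\mapsto-\theta$ combined with $a_j\mapsto(-1)^j a_j$, which I would establish from the same transformation property of $c_j$ used above, together with uniqueness in the implicit function theorem.
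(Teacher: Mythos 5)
Your proposal follows essentially the same route as the paper: the same choice of coordinates via the left eigenvector $w$ with $w^T v=1$, the same splitting $a_1=\theta v+v^{\bot}$, the extraction of $\hat{\lambda}+1$ from the linear term, and the order-by-order expansion of the remaining terms in powers of $\theta$ to read off the $\xi_k$. The only substantive difference is that you justify the parity of the expansions (only even powers surviving after factoring out $\theta$) by the phase equivariance $a_j\mapsto(-1)^j a_j$ together with uniqueness in the implicit function theorem, whereas the paper simply exhibits the parity structure of the coefficients $a_j=\sum_k\theta^k a_{j,k}$ as the outcome of the iterative substitution; your argument is a clean justification of a step the paper asserts, not a different method.
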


\begin{proof}
As in Proposition \ref{teodfoe}, let $v\in {\rm ker}(\hat L)$ be the eigenvector associated to $\hat{\lambda}.$ Considering $||v||=1,$ we can write $a_1=\theta v+v^{\bot},$ with $\theta \in \R$ and $v^{\bot}\in M$. Replacing in equation \eqref{bifred} results
\begin{equation}
Q L_1 (\theta v+v^{\bot}) + QG(i\omega, \mu,\tau)\, c_1\left(a(\theta v+v^{\bot}),\mu,\tau\right)=0.
\end{equation}

For  $w\in {\rm ker} (\hat{L}^{T})={\rm range}(\hat{L})^{\bot}$ (i.e., $w^{T}z=0,$ for $z\in {\rm range}(\hat{L})$) verifying $w^T v=1$, we can write the projection $Q: \mathbb{C}^m\to N$, as $Q(u)=(w^T  u) v,$ for $u \in\mathbb{C}^m$. Then, from the above equation we obtain the bifurcation equation in coordinates
\begin{equation}\label{eqcoor}
(\hat{\lambda}(i\omega,\mu,\tau)+1)(\theta+w^T v^{\bot})  + w^T G(i\omega, \mu,\tau)\,\, c_1\left(a(\theta v+v^{\bot}),\mu,\tau\right) = 0.
\end{equation}

Thus, for $-2q \leq j\leq 2q,$ $j\neq1,$ the equations \eqref{dfajs} and \eqref{eqfi} expressed in coordinates, generate the system
\begin{subequations}
\begin{align}
L_j a_j + G(ij\omega, \mu,\tau) c_j(a(\theta v+v^{\bot}),\mu,\tau) &= 0, \quad \text{if} \quad j\neq 1, \label{dfsaj1}\\
(I-Q)L_1 v^{\bot} + (I-Q) G(i\omega, \mu,\tau) c_1(a(\theta v+v^{\bot}),\mu,\tau) &= 0. \label{dfsaj2}
\end{align}
\end{subequations}

Consider the expansions in $\theta$ of order $2q$ for each coefficient $a_j=\sum_{k=1}^{2q}\theta^k\,a_{j,k}$. If we substitute this expansions in \eqref{dfsaj1} and \eqref{dfsaj2} we obtain expressions of $a_{j,k}$ for each $j$, as coefficients of $\theta^k$. In particular we have,  $a_0 = a_{0,2} \theta^2+a_{0,4}\theta^4+\ldots+a_{0,2q}\theta^{2q},$ $a_1 = v \theta+v^{\bot}= v \theta+ a_{1,3} \theta^3+\ldots+a_{1,2q-1}\theta^{2q-1},$ $a_2 = a_{2,2} \theta^2+a_{2,4}\theta^4+\ldots+a_{2,2q}\theta^{2q},$ $a_3 = a_{3,3} \theta^3+a_{3,5}\theta^5+\ldots+a_{3,2q-1}\theta^{2q-1},
\cdots, $ $a_{2q} = a_{2q,2q} \theta^{2q}.$

Furthermore, introducing the above formulas in \eqref{eqcoor}
and considering the coefficients of $\theta^{2k+1}$ in this expression, we find the functions $\xi_k(\omega,\mu,\tau)$ for $1\leq k \leq q.$ Thus, from \eqref{eqcoor} we obtain the bifurcation equation in coordinates \eqref{dfecbif}.

For each $\mu,$ $\tau$ and $\omega$ near critical values and $\theta$ that solves equation \eqref{dfecbif}, we find a solution of the form \eqref{yexp}, that corresponds to a periodic solution of the original system \eqref{dfsist0}.
\end{proof}

Unlike the graphic Hopf bifurcation theorem, the above theorem allows us to analyze the existence of multiple cycles. In the following section we will show  that studying equation \eqref{dfecbif}, we can determine regions in the parameter space with more than one cycle and manifolds in that space in which the number of cycles changes (at least locally).

For a fixed order $q,$ we can solve equations \eqref{dfsaj1} and  \eqref{dfsaj2} iteratively by increasing the powers of $\theta$ up to $\theta^{2q}$. In the following remark we show the steps to solve the equations considering $ q =1 $. Then we present an algorithm applicable for any order $q$.

\begin{remark}
For $q=1$, the bifurcation equation results
\begin{equation}
\hat{\lambda}(i \omega,\mu,\tau)+1+ \theta^{2} \xi_1(\omega,\mu,\tau)=0.
\end{equation}
and the expression for the solution of order $2$ is
\begin{equation}
y(t)= \hat{y} + \sum_{j=-2}^{2} a_j(\theta,\omega,\mu,\tau) e^{i j \omega t}.
\end{equation}

We need to calculate the expression for $a=(\bar{a}_2, \bar{a}_1,a_0,a_1,a_2)$, this is, the coefficients $a_0 = a_{0,2}\theta^2$, $a_{1}=\theta v$ and $a_2=a_{2,2}\theta^2$.

Step 1. Let $a_1=\theta v$ in $a$, and set the rest of the coefficients to 0. We substitute in  \eqref{dfsaj1}, for $j=0,2,$ and we obtain that the coefficients of $\theta^2$ are $a_{0,2}$ and $a_{2,2}$, respectively.

Besides, we have to calculate $\xi_1$ to determine the bifurcation equation.

Step 2. Let $a_0=a_{0,2} \theta^2 $, $a_1=\theta v$ and  $a_2=a_{2,2} \theta^2$  in $a.$ We replace in \eqref{eqcoor} and obtain  $\xi_1(\omega,\mu,\tau)$ as the coefficient of $\theta^3$.
\end{remark}

For a fixed order $q,$ we summarize the steps to apply the algorithmic approach of the frequency method in Table \ref{mfiter}. In particular, we show how to obtain expressions for the coefficients $\xi_i$ in the bifurcation equation \eqref{dfecbif}. We note that, if the dimension of the output variable $y$ is $1,$ the vectors $v$ and $w$ associated to $\hat{\lambda}$ are equal to $1.$

\begin{table}[t] \centering%
{\footnotesize\caption{Algorithm to apply frequency method for order $q.$}\label{mfiter} \vspace{0.3cm}
\begin{tabular}{l}
\hline\hline\\

{\bf General definitions} \vspace{0.1cm}\\
\begin{tabular}{ll}
 & Let $(A_0,A_1,B,C)$ be a minimal realization of the system \eqref{dfsist0}. \\
 & Calculate $GJ$ in \eqref{gj} and the eigenvalue $\hat{\lambda}$ such that:\\
 & at critical values $\omega_0,$ $\mu_0$ and $\tau_0,$ it results $\hat{\lambda}(i\omega_0,\mu_0,\tau_0)=-1.$\\
 & Calculate the eigenvectors associated to $\hat{\lambda}$: $v$ for $GJ$ and $w$ for $GJ^{\, T}.$ \\
 & Normalize $v$ and $w,$ such that $||v||=1$ and $w^T v=1.$\\
 & Consider $a=(\bar{a}_{2q},\ldots,\bar{a}_1,a_0,a_1,
\ldots,a_{2q}),$ being \\
 & $a_0 = a_{0,2} \theta^2+a_{0,4}\theta^4+ \ldots+a_{0,2q}\theta^{2q},$\\
 & $a_1 = v \theta+v^{\bot}= v \theta+ a_{1,3}\theta^3+\ldots+a_{1,2q-1}\theta^{2q-1},$\\
 & $a_2 = a_{2,2}\theta^2+a_{2,4}\theta^4+ \ldots+a_{2,2q}\theta^{2q},$\\
 & $a_3 = a_{3,3}\theta^3+a_{3,5}\theta^5+ \ldots+a_{3,2q-1}\theta^{2q-1},$ $\ldots $, $a_{2q} = a_{2q,2q} \theta^{2q}.$ \vspace{.3cm}\\
\end{tabular}\\
\hline\\
{\bf Iterative process}\vspace{0.1cm}\\

\begin{tabular}{lll}
&{\small Step 1} &Let $a_1=v\theta$ in $a$ and the rest of coefficients 0. \\
& &  For $k$ from $1$ to $q:$\vspace{0.1cm} \\
& & \begin{tabular}{ll}
       {\small 1.1} & Replace $a$ in \eqref{dfsaj1} for $j=0,2,\ldots, 2k$ and calculate $a_{j,2k}$ as the \\
       & coefficients of $\theta^{2k}.$ Then update vector $a$. If $k=q$ go to Step 2.\\
       {\small 1.2} & Replace $a$ in \eqref{dfsaj2} and \eqref{dfsaj1} to calculate $a_{1,2k+1}$ and\\
       &  $a_{j,2k+1},$ as the coefficient of $\theta^{2k+1}$ in each equation, respectively.\\
       &Then update vector $a$.\\
    \end{tabular}\\

&{\small Step 2}  & Replace $a$ in \eqref{eqcoor} and calculate $\xi_k$ as the coefficient of $\theta^{2k+1},$\\
&&  for $k=1, \ldots,q.$ \vspace{0.1cm}\\

\hline\hline
\end{tabular}
\end{tabular}}
\end{table}

\section{Analysis of the bifurcation equation using singularity theory}\label{analysis}

In Theorem \ref{teoppal} we proved that every solution of the bifurcation equation in the frequency domain \eqref{dfecbif} is associated with a periodic solution of the delayed system \eqref{dfsist0}. In this section we propose two approaches to study the bifurcation equation with singularity theory. In particular, we can analyze all bifurcation of periodic solutions with codimension less than or equal to two. The terminology of the singularity theory used in this section is detailed in \cite{golubitsky85}.

The first approach allows us to determine generalized Andronov-Hopf bifurcations and periodic solutions with small amplitude. The second approach is more suitable to analyze periodic solutions connecting two different Andronov-Hopf bifurcation points.

Hereafter we suppose that system \eqref{dfsist0} has bifurcation parameter $\mu$ and vector of auxiliary parameters $\rho \in \R^r.$ To simplify the notation we consider the delay $\tau$ as an auxiliary parameter.

The stability of periodic solutions is determined using information of the equilibrium stability as well as the direction in which the periodic solutions emerge when the bifurcation parameter varies.

\subsection{Bifurcations of periodic orbits with small amplitude}
We study the bifurcation equation to determine the existence and multiplicity of periodic solutions with small amplitude. Using singularity theory we analyze Andronov-Hopf and generalized Andronov-Hopf bifurcations, and we describe regions of the parameter space in which the system presents up to three periodic orbits.

Let $(\omega_0,\mu_0,\rho)$ be a point in the parameter space such that the hypothesis of Theorem \ref{teoppal} are verified and
\begin{equation}\label{condinodeg}
\left.\left(\real{\hat{\lambda}_{\mu}} \imag{\hat{\lambda}_{\omega}}-\imag{\hat{\lambda}_{\mu}} \real{\hat{\lambda}_{\omega}}\right)\right|_{(\omega_0,\mu_0,\rho)}\neq0.
\end{equation}
Then, the real and the imaginary parts of the bifurcation equation \eqref{dfecbif} form a system that verifies the hypothesis of the implicit function theorem at $(\omega_0,\mu_0,\rho).$ Taking implicit derivatives, for   $\theta\neq 0$ small enough, we have
\begin{eqnarray}
\mu & = & \mu_0 + \mu_1 \theta^2 +  \cdots + \mu_{q} \theta^{2q},\label{ecmu}\\
\omega & = & \omega_0 + \omega_1 \theta^2 + \cdots  + \omega_{q} \theta^{2q},
\end{eqnarray}
where $\mu_{k}=\mu_{k}(\rho)$ and $\omega_{k}=\omega_{k}(\rho),$ for $0 < k\le q.$

Studying equation \eqref{ecmu} we can describe the dynamics associated with periodic solutions of the system \eqref{dfsist0}, in large regions of the auxiliary parameter space. In particular, depending on the order $q,$ it is possible to determine the existence of different bifurcations, for example: Andronov-Hopf bifurcation, generalized Andronov-Hopf bifurcation and fold bifurcation for limit cycles.

We suppose that equation \eqref{ecmu} with $\rho=\rho_0$ is $\Z_2$-equivalent (see \cite{golubitsky85}) to the normal form
\begin{equation} \label{fnhopfgen}
\mu_q(\rho_0)\theta^{2q} - (\mu-\mu_0)=0.
\end{equation}
Then, the coefficients $\mu_k(\rho),$ $1\le k\le q-1,$ verify $\mu_k(\rho_0)=0,$ and in a neighborhood of $\mu=\mu_0$ and $\rho=\rho_0,$ in which $\mu_q(\rho)\neq 0,$ we have the universal unfolding (see \cite{golubitsky85})
\begin{equation}
\mu_q(\rho) \theta^{2q} - (\mu-\mu_0) + \mu_1(\rho) \theta^2 + \mu_2(\rho) \theta^4+ \cdots + \mu_{q-1}(\rho)\theta^{2(q-1)}=0.
\end{equation}

In the next, we detail the unfoldings for $q=1,2,3.$ In each case, we indicate the transition varieties (see \cite{golubitsky85}) defined in parameter space $(\mu_1,\ldots,\mu_{q-1})$. This varieties determine regions in the parameter space in which the system has persistent bifurcation diagrams (see \cite{golubitsky85}). The transition varieties are associated with changes in the number of periodic solutions with small amplitude.

To determine the stability of the periodic solutions, we suppose the equilibrium is asymptotically stable for values of the bifurcation parameter less than the critic value $\mu=\mu_0$. In the figures we indicate stable (unstable) equilibria and cycles with continuous (dashed) line.

\begin{itemize}
\item Case $q=1.$ Andronov-Hopf bifurcation.

The normal form is
\begin{equation}
\mu_1(\rho) \theta^{2} - (\mu-\mu_0) =0.
\end{equation}
If $\mu_1(\rho)>0$ ($\mu_1(\rho)<0$) the bifurcation is supercritical (subcritical).

\item Case $q=2.$ Generalized Andronov-Hopf bifurcation (Bautin bifurcation).

The universal unfolding of \eqref{fnhopfgen} results
\begin{equation} \label{bautinuu}
\mu_2(\rho) \theta^{4} - (\mu-\mu_0) + \mu_1(\rho) \theta^2 =0.
\end{equation}
We suppose $\mu_2(\rho)>0$ in a neighborhood of $\rho_0$ (the case $\mu_2(\rho)<0$ is similar). The transition variety is $H_0:\mu_1(\rho)=0.$ In Figure \ref{casoq2} we show the two different persistent bifurcation diagrams. If $\mu_1(\rho)>0$ a stable periodic solution exists for $\mu>\mu_0,$ whereas that if $\mu_1(\rho)<0$ a fold bifurcation of periodic solutions is observed, the cycle with smaller amplitude is unstable and the other cycle results stable.

\begin{figure}
\begin{center}
\includegraphics[width=0.4\textwidth]{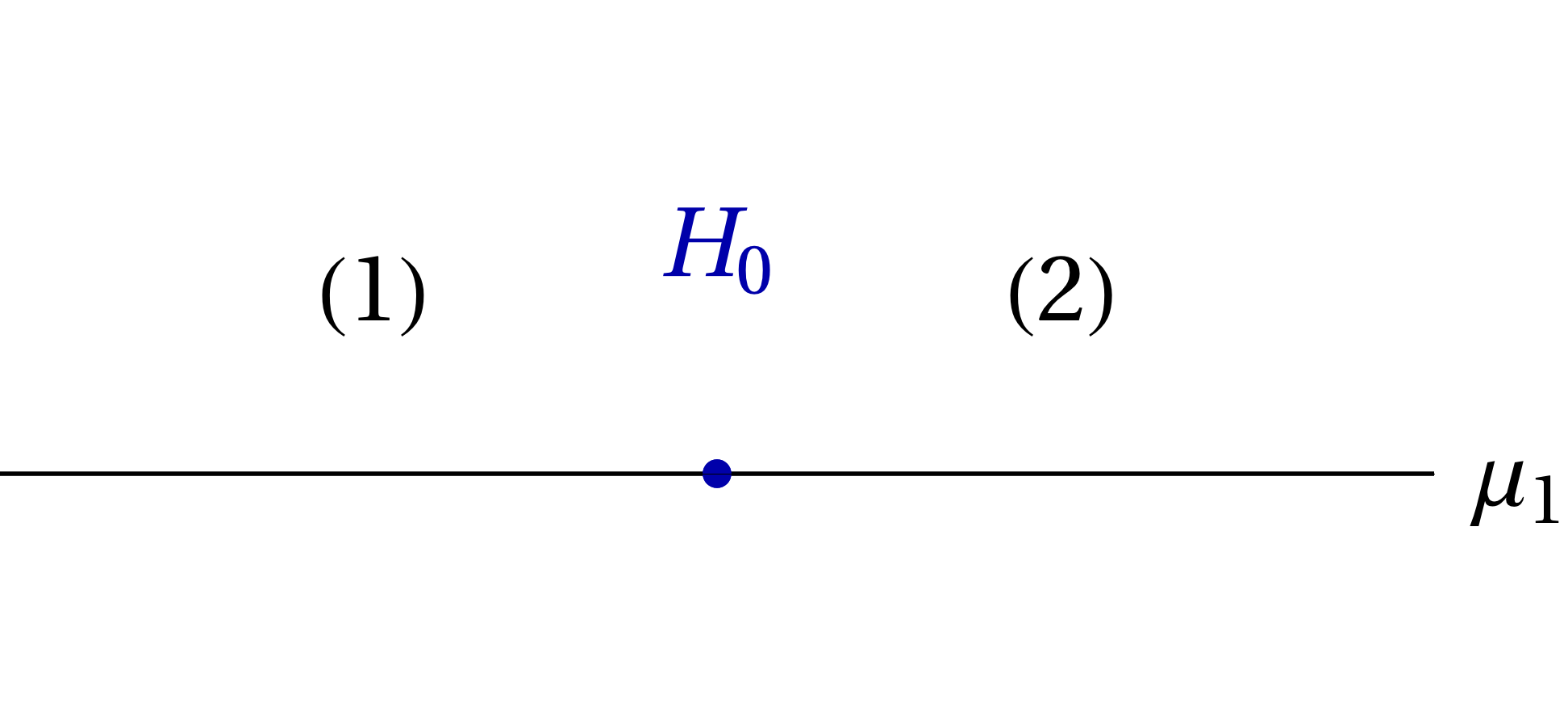} \hspace{.5cm}
\includegraphics[width=0.4\textwidth]{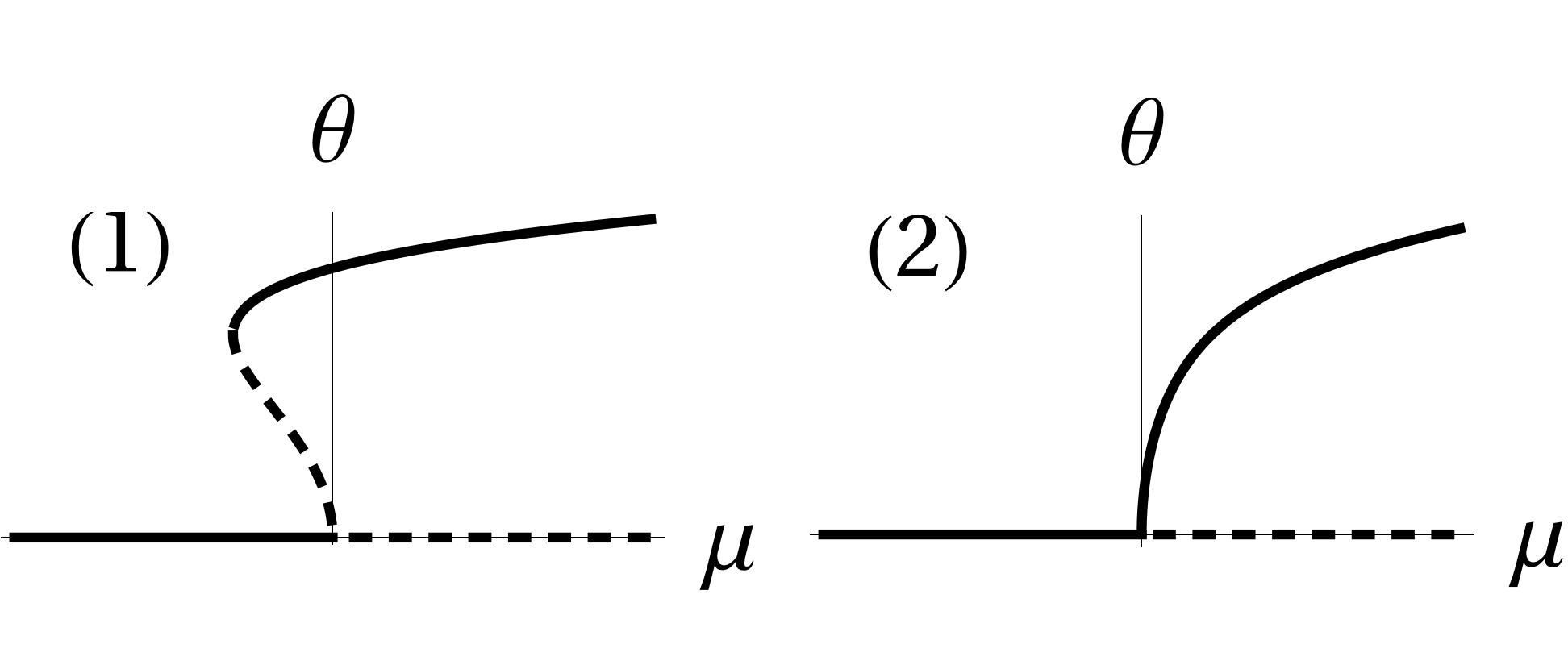}
\caption{Case $q=2.$ Left: Transition variety $H_0$ in the parameter space $\mu_1$. Right: Persistent bifurcation diagrams for the values indicated in the figure on the left. Stable (unstable) equilibria and cycles in continuous (dashed) line. } \label{casoq2}
\end{center}
\end{figure}

\item Case $q=3.$ Generalized Andronov-Hopf bifurcation.

The universal unfolding is
\begin{equation}\label{uuq3}
\mu_3(\rho) \theta^{6} - (\mu-\mu_0) + \mu_1(\rho) \theta^2 + \mu_2(\rho) \theta^4 =0.
\end{equation}
We suppose $\mu_3(\rho)>0$ in a neighborhood of $\rho_0$ (the case $\mu_3(\rho)<0$ is similar). The transition varieties, in the space $(\mu_1,\mu_2),$ are
\begin{equation}\label{vtq3}
\begin{array}{l}
H_0:\mu_1=0,\\
H_1:3 \mu_1 \mu_3 = \mu_2^2, \quad \mu_2 \mu_3\le 0,\\
D:4 \mu_1 \mu_3 = \mu_2^2, \quad \mu_2 \mu_3 \le0.
\end{array}
\end{equation}
In Figure \ref{casoq3} we plot $H_0,$ $H_1$ and $D,$ and the different persistent bifurcation diagrams obtained in regions of the parameter space limited for this curves. For suitable parameter values we can observe up to three coexistent periodic orbits of small amplitude.

\end{itemize}

\begin{figure}
\begin{center}
\includegraphics[width=0.4\textwidth]{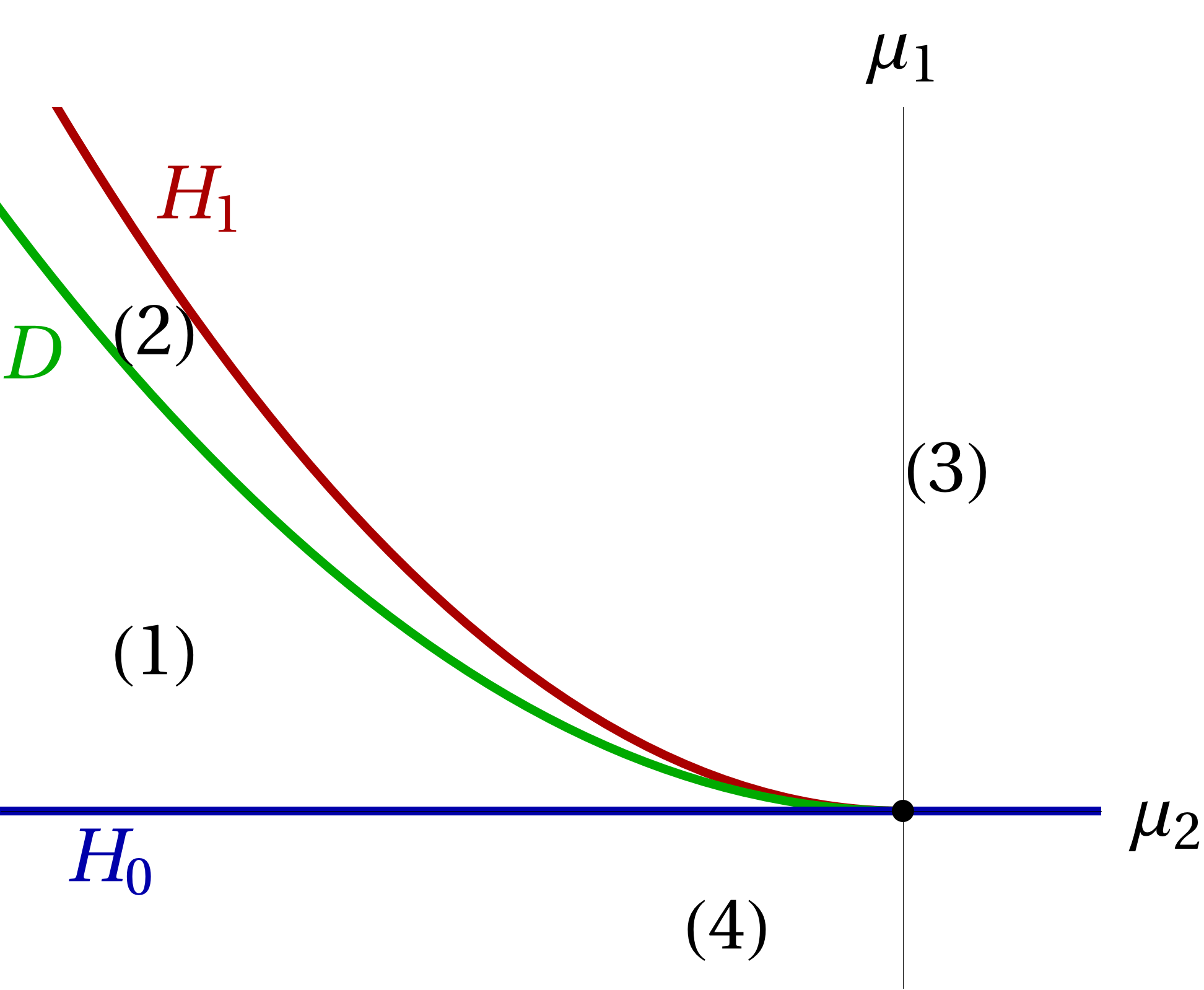} \hspace{.5cm}
\includegraphics[width=0.4\textwidth]{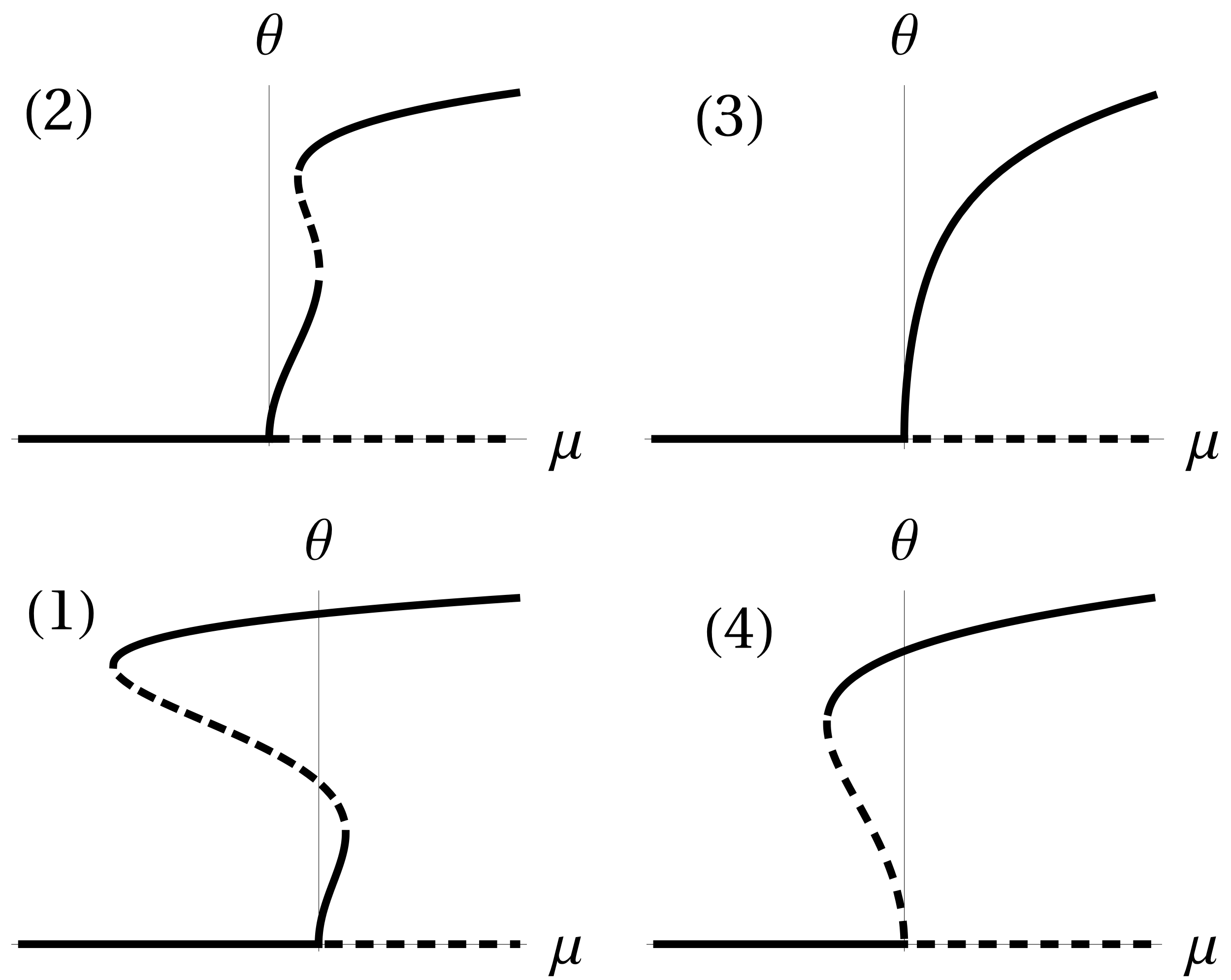}
\caption{Case $q=3.$ Left: Transition varieties $H_0,$ $H_1$ and $D$ defined in \eqref{vtq3}. Right: Persistent bifurcation diagrams for values indicated in the figure on the left. Stable (unstable) equilibria and cycles in continuous (dashed) line.} \label{casoq3}
\end{center}
\end{figure}

\subsection{More bifurcations of periodic orbits}

In this section we present an alternative approach to study periodic solutions with the bifurcation equation \eqref{dfecbif}. Here, we consider the frequency $\omega$ as variable (rather than the amplitude). Unlike the analysis in the previous section, this consideration allows us to study all bifurcations of periodic orbits with normal form of codimension less than or equal to two \cite{golubitsky85}.  
This alternative approach is particularly useful to analyze periodic solutions that connect two Andronov-Hopf bifurcation points.

Considering $z=\theta^2,$ the bifurcation equation results
\begin{equation}
\hat{\lambda}(i \omega,\mu)+1+\sum_{k=1}^{q} z^{k} \xi_k(\omega,\mu)=0.
\end{equation}
By the implicit function theorem, if
\begin{equation}
\left.\left(\real{\hat{\lambda}_{\mu}} \imag{\xi_1}-\imag{\hat{\lambda}_{\mu}} \real{\xi_1}\right)\right|_{(\omega_0,\mu_0,\rho)}\neq0,
\end{equation}
is verified, we can consider $\mu$ and $z$ as functions of $\omega,$ in a neighborhood of $(\omega_0,\mu_0,\rho)$ and for small values of $z.$  To simplify the notation the parameter $\rho$ is considered as a variable of the functions $\mu$ and $z$.

With this notation, the bifurcations described in the above subsection result as follow. Suppose that $\frac{d^k \mu}{d\omega^k}(\omega_0,\rho_0)= 0,$  for $1\leq k\leq q-1,$ and $\frac{d^q \mu}{d\omega^q}(\omega_0,\rho)\neq 0,$ in a neighborhood of $\rho=\rho_0.$ Also, suppose that $\frac{d z}{d\omega}(\omega_0,\rho)\neq 0$ in a neighborhood of $\rho=\rho_0.$
Then, we obtain an equation that relates $\mu$ and $z,$ which is $\Z_2$-equivalent to the normal form
\begin{equation}
\epsilon_q(\rho_0) z^q-(\mu-\mu_0)=0.
\end{equation}
Considering $\epsilon_q(\rho)>0$ for $\rho$ near $\rho_0$ and since $z=\theta^2,$ the universal unfolding for each $q=1,2,3,$ is similar to the one studied in the above subsection.

Now we detail the two remaining normal form with codimension less than or equal to two. Suppose that $\frac{d z^k}{d\omega^k}(\omega_0,\rho_0) = 0,$ for $1\leq k \leq p-1,$ and $\frac{d z^p}{d\omega^p}(\omega_0,\rho)\neq 0$ and $\frac{d \mu}{d\omega}(\omega_0,\rho)\neq 0,$ are verified in a neighborhood of $\mu=\mu_0$ and $\rho=\rho_0.$ Then,  we obtain an equation $\Z_2$-equivalent to the normal form
\begin{equation}\label{normalp}
\epsilon(\rho_0) z+(\mu-\mu_0)^p=0.
\end{equation}
We describe next the universal unfoldings for $p=2,3.$

\begin{itemize}
\item Case $p=2$. \\
  The universal unfolding results
\begin{equation}
  \epsilon(\rho) z + (\mu-\mu_0)^2 + \epsilon_0(\rho) =0.
\end{equation}
In Figure \ref{casoqp1} we plot the persistent bifurcation diagrams for $\epsilon(\rho)>0.$ The transition variety is $B_0:\epsilon_0(\rho)=0.$  If $\epsilon_0(\rho)<0,$ a branch of stable periodic solutions exists that connect two Andronov-Hopf bifurcation points. If $\epsilon_0(\rho)>0$ there is not periodic solutions and the equilibrium is stable.

\begin{figure}
\begin{center}
\includegraphics[width=0.4\textwidth]{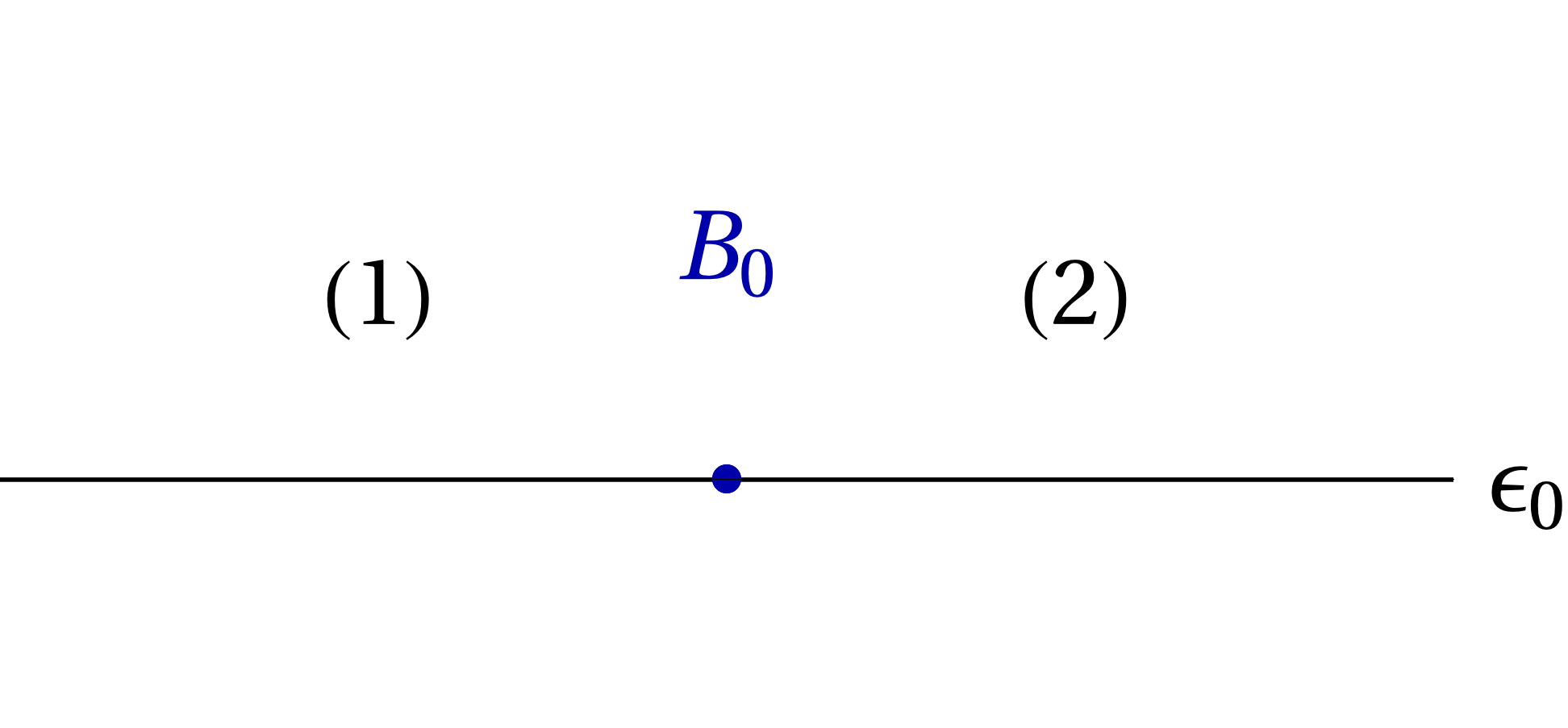} \hspace{.5cm}
\includegraphics[width=0.4\textwidth]{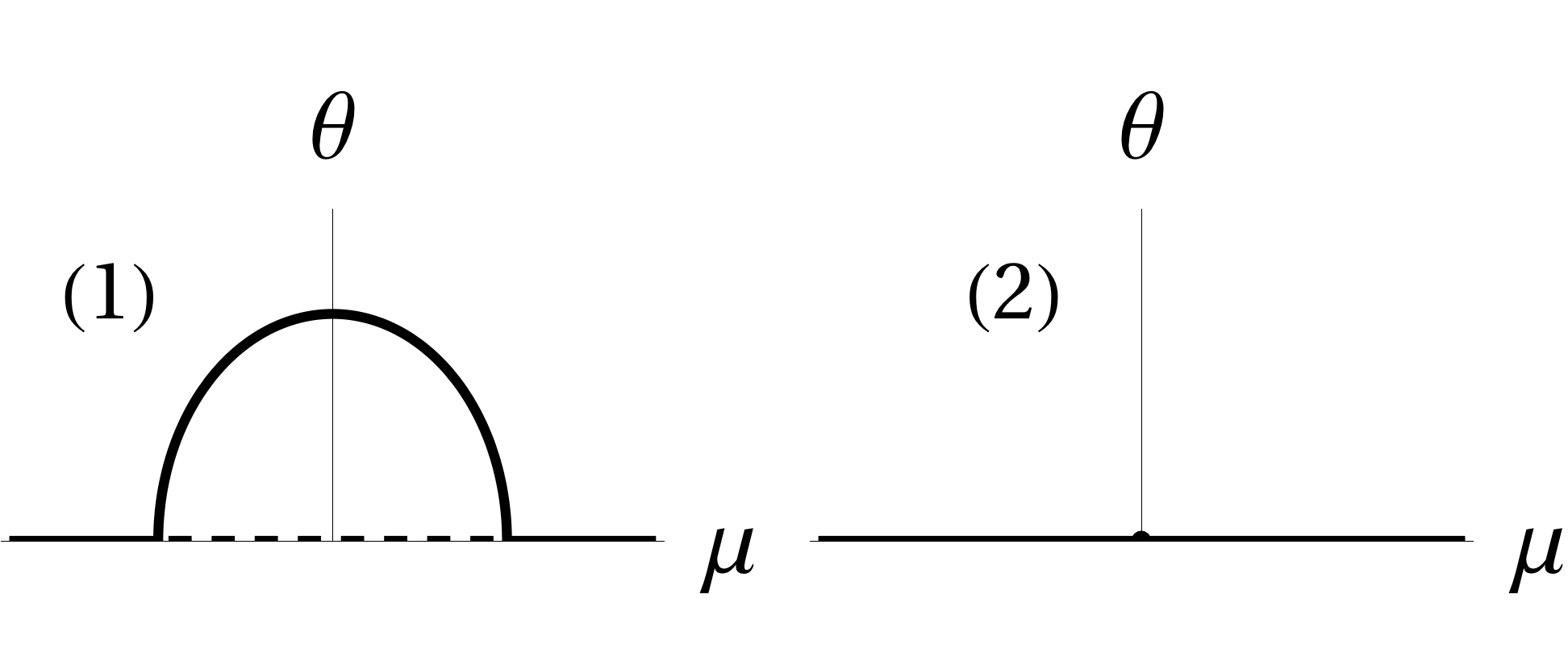}
\caption{Case $p=2.$ Left: Transition variety $B_0.$ Right: Persistent bifurcation diagrams indicated in the figure on the left. Stable (unstable) equilibria and cycles in continuous (dashed) line.} \label{casoqp1}
\end{center}
\end{figure}

\item Case $p=3$. \\
The universal unfolding of \eqref{normalp} is
$$ \epsilon(\rho) z + (\mu-\mu_0)^3 +\epsilon_1(\rho) (\mu-\mu_0) + \epsilon_0(\rho) =0.$$
We suppose that $\epsilon(\rho)>0.$ The transition variety is $B:(\epsilon_1/3)^3=(\epsilon_0/2)^2.$ In Figure \ref{casoqp2} we show the two different persistent bifurcation diagrams. If $(\epsilon_1/3)^3<(\epsilon_0/2)^2,$ the diagram of periodic solutions is topologically equivalent to that obtained in a supercritical Andronov-Hopf bifurcation. If $(\epsilon_1/3)^3>(\epsilon_0/2)^2,$  there is a supercritical bifurcation, but also for lower values of the parameter $\mu,$ there is a branch of stable periodic solutions connecting two bifurcation points.

\begin{figure}
\begin{center}
\includegraphics[width=0.4\textwidth]{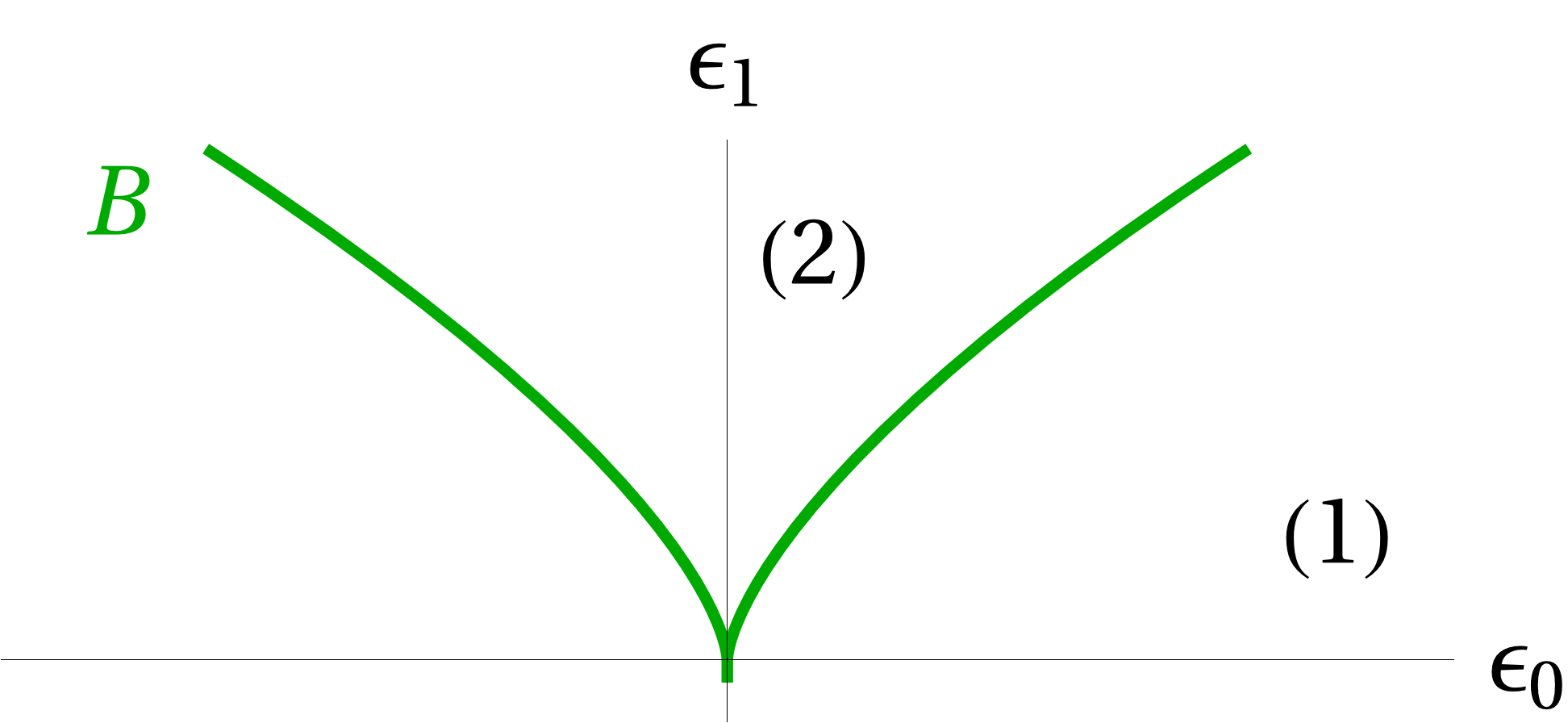} \hspace{.5cm}
\includegraphics[width=0.4\textwidth]{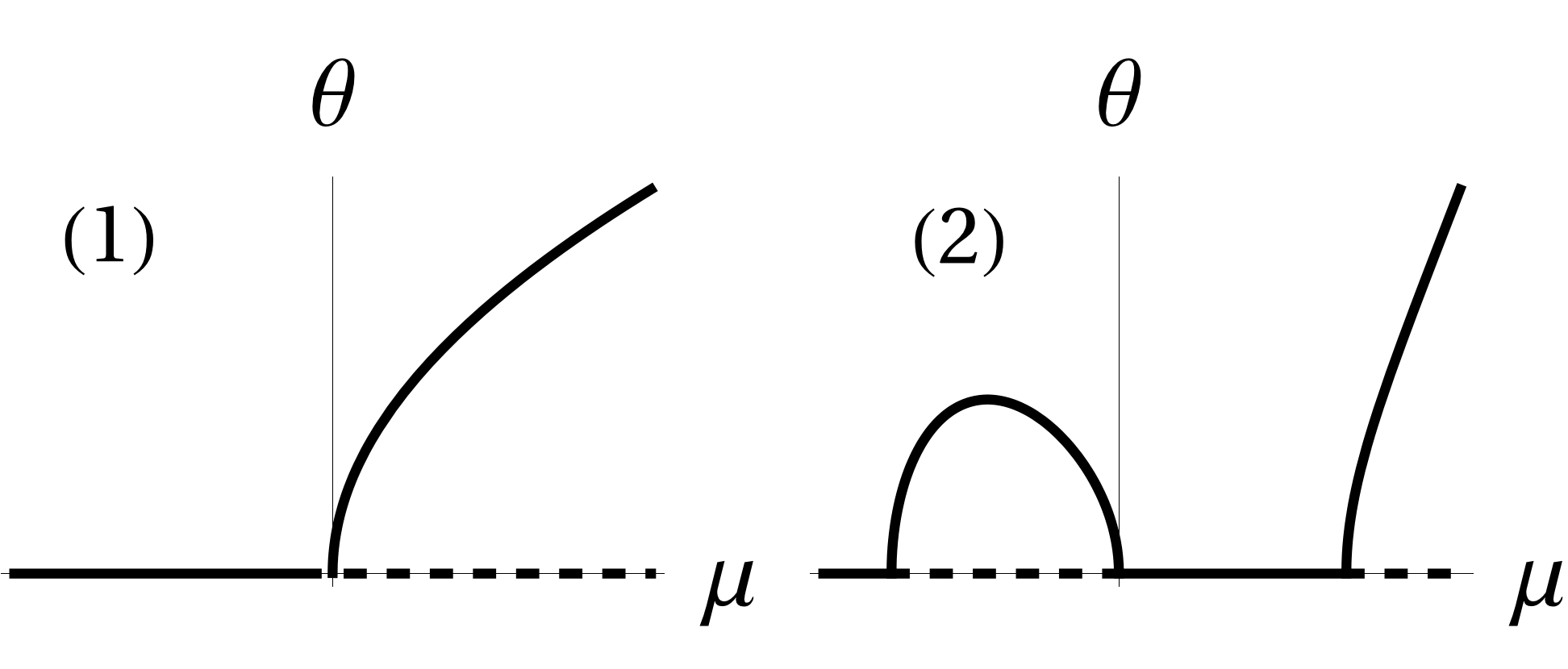}
\caption{Case $p=3.$ Left: Transition variety $B.$ Right: Persistent bifurcation diagrams indicated in the figure on the left. Stable (unstable) equilibria and cycles in continuous (dashed) line. } \label{casoqp2}
\end{center}
\end{figure}

\end{itemize}

The results obtained with the frequency method are local. Thus, the dynamics described in this section correspond with dynamics of the original system in small neighborhoods of $\mu=\mu_0$ and $\rho=\rho_0$ in the parameter space, and for periodic solutions of small amplitude.

\section{Examples}\label{examples}

In this section we study two systems using frequency domain methods in combination with singularity theory. We show in both examples the potentiality of the results obtained in the above sections.
In particular, in the first example we detail how to apply the algorithmic method using Table \ref{mfiter} and we analyze, with the two approaches proposed in Section \ref{analysis}, different bifurcations that this system presents. In the second example, we  determine a Bautin bifurcation in a first order delay differential equation.

\subsection{Bifurcations in a differential equation with delayed feedback}

We consider the system
\begin{multline}\label{just} x'(t)= \Big( \begin{array}{cc} \mu & -1 \\ 1 & \mu \end{array} \Big) x(t) + \left(x_1(t)^2+x_2(t)^2\right) \Big( \begin{array}{cc} 1 & -\gamma \\ \gamma & 1 \end{array} \Big) x (t) - \\
\kappa \Big( \begin{array}{cc} \cos \beta & -\sin \beta \\ \sin \beta & \cos \beta \end{array} \Big)(x(t)- x(t-\tau)) ,
\end{multline}
where $x(t)=(x_1(t),x_2(t))\in \R^2,$ the bifurcation parameter is $\mu\in \R$, the auxiliary parameters are $\gamma,$ $\kappa$ and $\beta,$ being $\gamma>0,$ $\kappa\neq 0$ and $\beta \in \R,$ and $\tau>0$ is the constant delay.

The above system represents a basic form of a subcritical Andronov-Hopf bifurcation with a Pyragas delayed feedback. Similar systems were studied in \cite{fiedler07, gentile12,just07} aiming to make stable the periodic cycle. Presence of feedback induces changes of stability for certain values of parameters, but also generates a variety of bifurcation scenarios in which, among others, we can observe multiplicity of periodic orbits.

We rewrite the equation \eqref{just} using the input-output representation
\begin{equation}\label{dfjust}
\left\{ \begin{array}{ccl}
x'(t) & = &(\mu -\kappa \cos \beta) x(t) + \kappa \cos \beta\,  x(t-\tau) + g(y(t),y(t-\tau)),\\
y & = & - x,
\end{array} \right.
\end{equation}
where $g:\mathbb{R}^4\to \mathbb{R}^2$ is the function defined by
\begin{equation}
g(y(t),y(t- \tau))= \Big( \begin{array}{c} y_2 -\kappa \sin \beta (y_2 - y_{2\tau}) - (y_1^2+y_2^2)(y_1 - \gamma y_2)\\ -y_1 +\kappa \sin \beta (y_1 - y_{1\tau}) - (y_1^2+y_2^2)(\gamma y_1 + y_2) \end{array} \Big),
\end{equation}
being $y_{i\tau}=y_i(t-\tau),$ for $i=1,2$. The realization $((\mu-\kappa \cos\beta),\kappa \cos\beta ,1,1)I_2,$ with $I_2$ the identity matrix of order $2,$ is minimal and its associated transfer function results
\begin{equation}
\displaystyle G(s,\mu,\tau) = \frac{1}{s-\mu+\kappa \cos \beta (1-e^{-s\tau})}\, I_2.
\end{equation}

If $\mu\gamma\neq 1,$ the unique equilibrium of the system is $\hat{y}=(0\,\, 0)^T.$ The transfer matrix for this equilibrium is
\begin{equation}
GJ(s,\mu,\tau)=\frac{1 -\kappa \sin \beta (1 - e^{-s\tau})}{s-\mu+\kappa \cos \beta (1-e^{-s\tau})}\Big( \begin{array}{c c} 0 & 1 \\ -1 & 0\end{array} \Big).
\end{equation}
The characteristic function
\begin{equation}\label{lambdas}
\hat{\lambda}(s,\mu,\tau) = -i \frac{1 -\kappa \sin \beta (1 - e^{-s\tau})}{s-\mu+\kappa \cos \beta (1-e^{-s\tau})},
\end{equation}
verifies Lemma \ref{lemma1} (i.e., take value -1), at critical values
defined by
%
\begin{equation}
\tau =\frac{\pm \arccos(\cos\beta -\frac{\mu}{\kappa})+\beta + 2\pi n}{\omega}, \quad n\in\N,
\end{equation}
being $\omega=1-\kappa \sin \beta \pm \sqrt{\kappa^2-(-\mu +\kappa \cos \beta)^2}.$ The above equation define curves in the $\mu$-$\tau$ space. In Figure \ref{hopfjust} we show some of this curves for fixed $\beta=\pi/4$ and values of $\kappa$ indicated.

For $\kappa$ small enough (for example, $\kappa \in(-0.1,0.1)$), $\hat{\lambda}$ verifies
$\Big.\Big(\frac{\partial {\rm Re}\, \hat{\lambda}}{\partial \mu}\frac{\partial {\rm Im}\, \hat{\lambda}}{\partial \omega} -\frac{\partial {\rm Re}\, \hat{\lambda}}{\partial \omega} \frac{\partial {\rm Im}\, \hat{\lambda}}{\partial \mu}\Big)\Big|_{(\omega_0,\mu_0,\tau_0)}\neq0,$
at critical values $(\omega_0,\mu_0,\tau_0)$. Then the system \eqref{dfjust} has a local bifurcation of periodic solutions. 
Increasing the value of $\kappa$ we can observe that values of $\tau$ exist in which there are more than one Andronov-Hopf bifurcation point (see Fig. \ref{hopfjust} right).

\begin{figure}
\begin{center}
\includegraphics[width=0.3\textwidth]{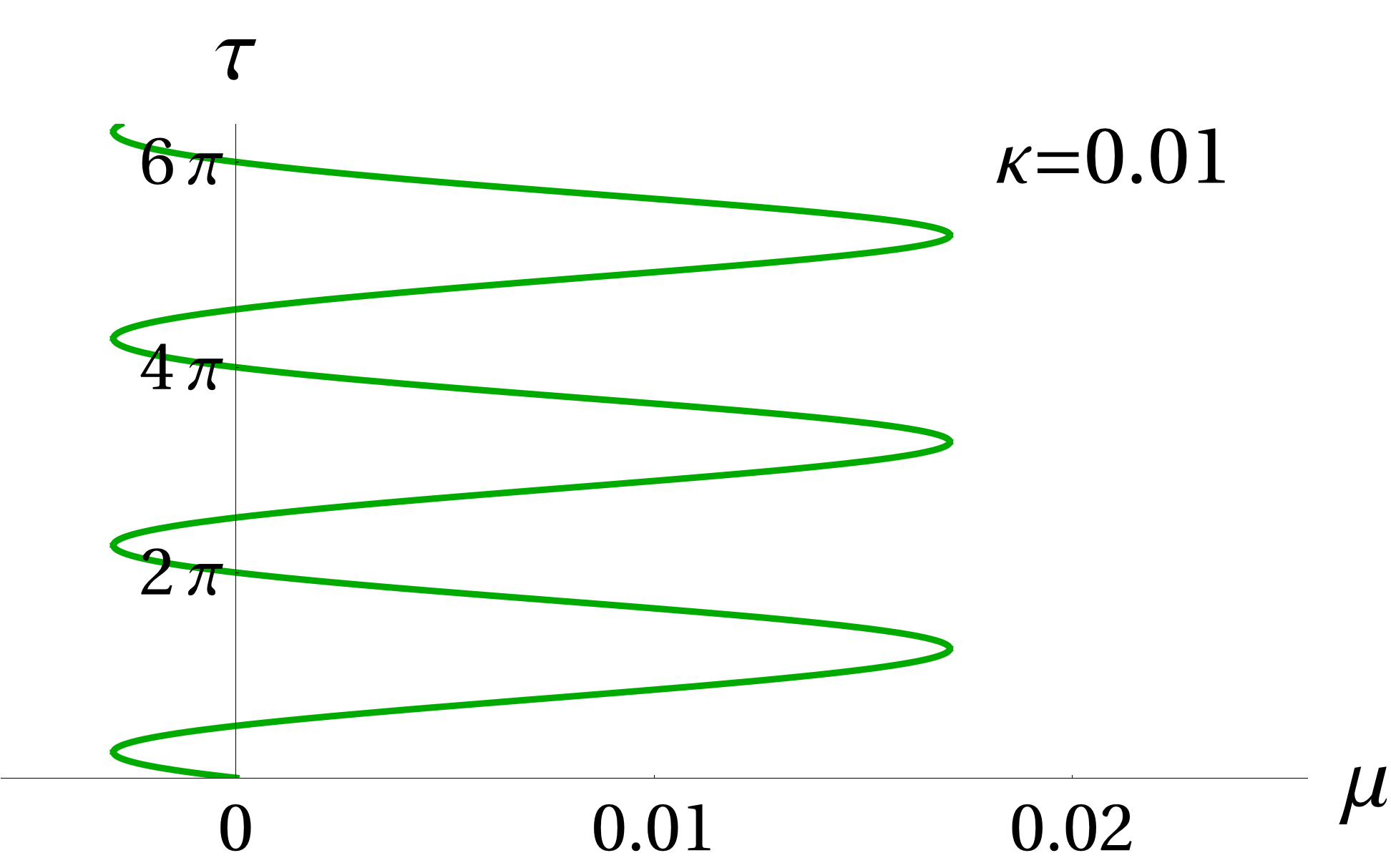}\qquad
\includegraphics[width=0.3\textwidth]{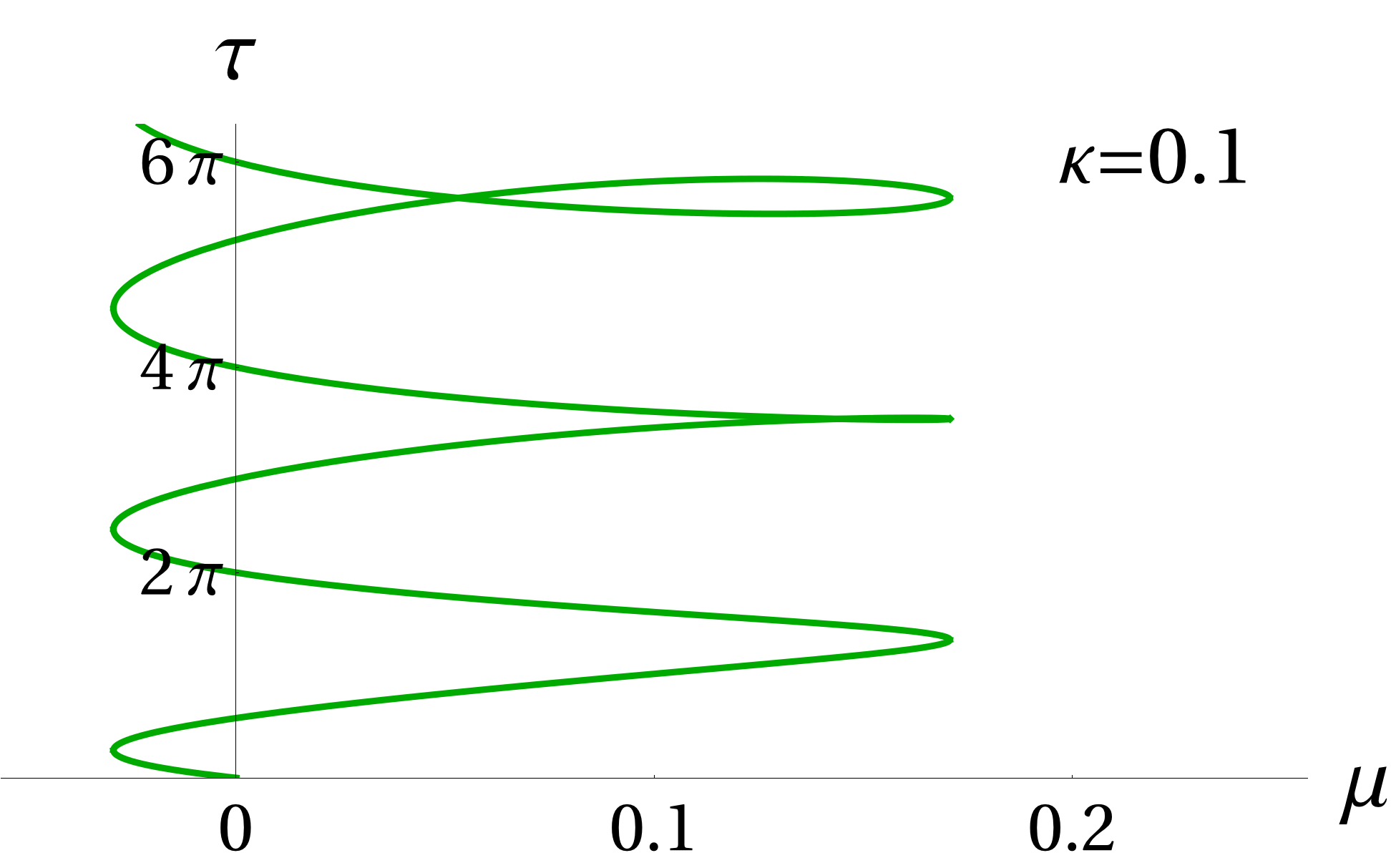}
\caption{Curves of possible Andronov-Hopf points for indicated values of $\kappa$. } \label{hopfjust}
\end{center}
\end{figure}
We consider an approximation of periodic solutions of order $2q,$ with $q>1,$ and use the steps in Table \ref{mfiter} to find a $q$-th  order bifurcation equation in local coordinates.

The eigenvectors associated with $\hat{\lambda}$ are $v=1/\sqrt{2}(1,-i)^T$ and $w= 1/\sqrt{2}(1,i)^T.$

In step 1, we define $a_1= v \theta.$ For $k=1,$ because of the form of the function $g,$ $c_j=0$ for $j\neq1$ and $c_1=-2 (1+i \gamma) \theta^3 v$, so the equations 
considered in steps 1.1 and 1.2 are
\begin{eqnarray}
L_j a_j = 0, \qquad \text{if} \quad j= 0,2, \\
(I-Q)L_1 v^{\bot} + (I-Q) G(i\omega, \mu,\tau) (-2 (1+i \gamma)) \theta^3 v = 0.
\end{eqnarray}
respectively.
From step 1.1 we obtain $a_{02}=a_{22}=0,$ and considering the step 1.2, it results $a_{1,3}=0.$ As we can observe, coefficient $a_1$ does not have $\theta^3$ term, so it has not corrections. Continuing the calculations in iterative form up to order $q,$ since $c_j=0$ for $j>1,$ all remain coefficients vanish. Therefore, we have $a_1= v \theta$ and $a_j=0,$ for $0\le j\le 2q,$ $ j\neq 1.$

Following step 2 and replacing the vector $a=(0,\ldots,0,\bar{v}\theta,0,v \theta,0,\ldots,0)$ in equation \eqref{eqcoor} results
\begin{equation}
(\hat{\lambda}(i\omega,\mu,\tau)+1)\theta + w^T G(i\omega, \mu,\tau)(-2 (1+i \gamma)) \theta^3 v = 0.
\end{equation}
Taking the coefficients of $\theta$ we observe that $\xi_k=0$ for $k=2,\ldots,q.$ Then, the $q$-th order bifurcation equation in frequency domain in local coordinates is given by
\begin{equation}\label{dfbifjust}
\hat{\lambda}(i \omega ,\mu,\tau)+1-\frac{2(1+i\gamma)}{i\omega-\mu+(1-e^{-i\omega \tau})\kappa \cos \beta}\theta^{2} =0.
\end{equation}

Simplifying the above equation and taking real and imaginary part we obtain the system
\begin{equation}\label{dfbifecjust}
\begin{array}{rcl}
-\mu +\kappa \cos \beta-\kappa \cos(\beta-\omega \tau) -2 \theta^2 & = & 0,\\
\kappa \sin \beta - \kappa \sin (\beta -\omega \tau) - 1+\omega -2\gamma \theta^2 & = & 0.
\end{array} 
\end{equation}
For fixed values of parameters $\beta$ and $\kappa,$ solutions  $(\theta,\omega,\mu,\tau)$ of the above system in a neighborhood of the critical values, are in one to one correspondence with periodic solutions of the form: $y(t) = \sqrt{2} \theta \Big( \begin{array}{c} \cos{t\omega}\\ \sin{t\omega} \end{array} \Big).$

Is important to point that, in this special example, it was not necessary to consider approximation of the coefficients as expansions in $\theta.$ Then, there is only one bifurcation equation expressed in coordinates and it is given by \eqref{dfbifjust}.
Furthermore, system \eqref{dfbifecjust} describes the dynamics of periodic solutions of any amplitude.

\subsubsection{Generalized Andronov-Hopf bifurcation}

We begin analyzing the existence of generalized Andronov-Hopf bifurcations in system. Thus, we consider the normal form \eqref{fnhopfgen} and the universal unfolding of case $q=3$ described in the above section \eqref{uuq3}. Let $\omega=\omega_0$ and $\mu=\mu_0$ values in which are verified the necessary conditions. For small values of $\theta,$ the system \eqref{dfbifecjust} allows us to calculate $\mu$ and $\omega$ as functions of $\theta$ up to any desired order. As an example, up to order $6$ we obtain
\begin{equation}\label{dfbifthjust}
\mu  =  \mu_0+\mu_1\theta^2+ \mu_2 \theta^4+\mu_3 \theta^6,
\end{equation}
with
\begin{eqnarray*}
 \mu_1 = -\left(2+\frac{2 \kappa \gamma \tau \sin(\beta-\tau\omega_0)}{1+\kappa \tau \cos(\beta-\tau\omega_0)}\right), \quad \mu_2=\frac{2 \kappa \gamma^2 \tau^2(\kappa \tau+ \cos(\beta-\tau\omega_0))}{(1+\kappa \tau \cos(\beta-\tau\omega_0))^3}, \\
 \mu_3=-\frac{4}{3}\frac{\kappa \gamma^3\tau^3 (-1+3\kappa^2 \tau^2+2\kappa \tau \cos(\beta-\tau\omega_0))\sin(\beta-\tau\omega_0)}{(1+\kappa \tau \cos(\beta-\tau\omega_0))^5},
\end{eqnarray*}
and the approximation of the frequency
\begin{eqnarray}
\omega = \omega_0 + \omega_1 \theta^2 +\omega_2 \theta^4+\omega_3 \theta^6,
\end{eqnarray}
with
\begin{eqnarray*}
 \omega_1=\frac{2\gamma}{1+\kappa \tau \cos(\beta-\tau\omega_0)},\quad \omega_2 = - \frac{2 \kappa \gamma^2 \tau^2 \sin(\beta-\tau\omega_0)}{(1+\kappa \tau \cos(\beta-\tau\omega_0))^3}\\
 \omega_3=-\frac{4}{3}\frac{\kappa \gamma^3\tau^3 (-\cos(\beta-\tau\omega_0)+\kappa \tau(-2+ \cos2(\beta-\tau\omega_0)))}{(1+\kappa \tau \cos(\beta-\tau\omega_0))^5}.
\end{eqnarray*}

Using singularity theory developed in the above section, analysis of equation \eqref{dfbifthjust} allows us to study and determine different regions in parameter space in which the system has degenerated bifurcations associated with multiplicity of periodic solutions.

Fixed $\beta=\pi/4$ and $\gamma=-10,$ we obtain the bifurcation diagram in $\kappa$-$\tau$ space showed in Figure \ref{fig_dfjust} left, where we plot transition varieties defined in \eqref{vtq3}.
At point  $(\kappa_0,\tau_0)=(-0.0475468061, 2.0927529542),$
the three curves intersect and we have the normal form
\begin{equation}
\mu_3\theta^6- (\mu-\mu_0)=0,
\end{equation}
being $\mu_3<0.$ 
In each region between this curves we find different scenarios which correspond to the universal unfolding of the above normal form. We show in Figure \ref{fig_dfjust} right, examples of each persistent bifurcation diagrams. Stability of cycles is not indicated in the diagrams, however this stability can be found considering that the equilibrium is asymptotically stable for values $\mu<\mu_0.$ Periodic solutions of small amplitude agree with numeric results (not shown). 

\begin{figure}
\begin{center}
	\includegraphics[width=.35\textwidth]{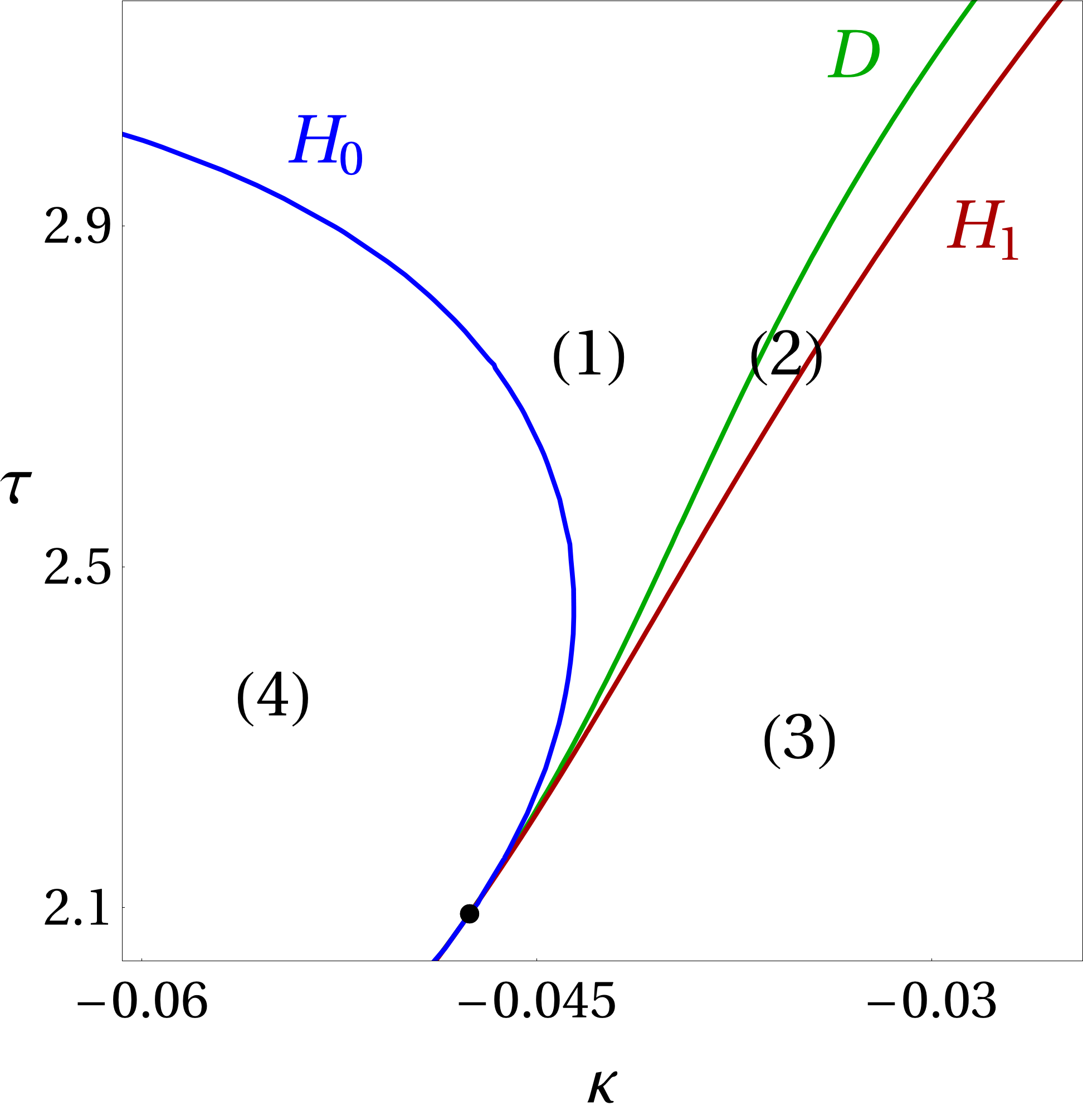}
\hspace{1cm}
	\includegraphics[width=.4\textwidth]{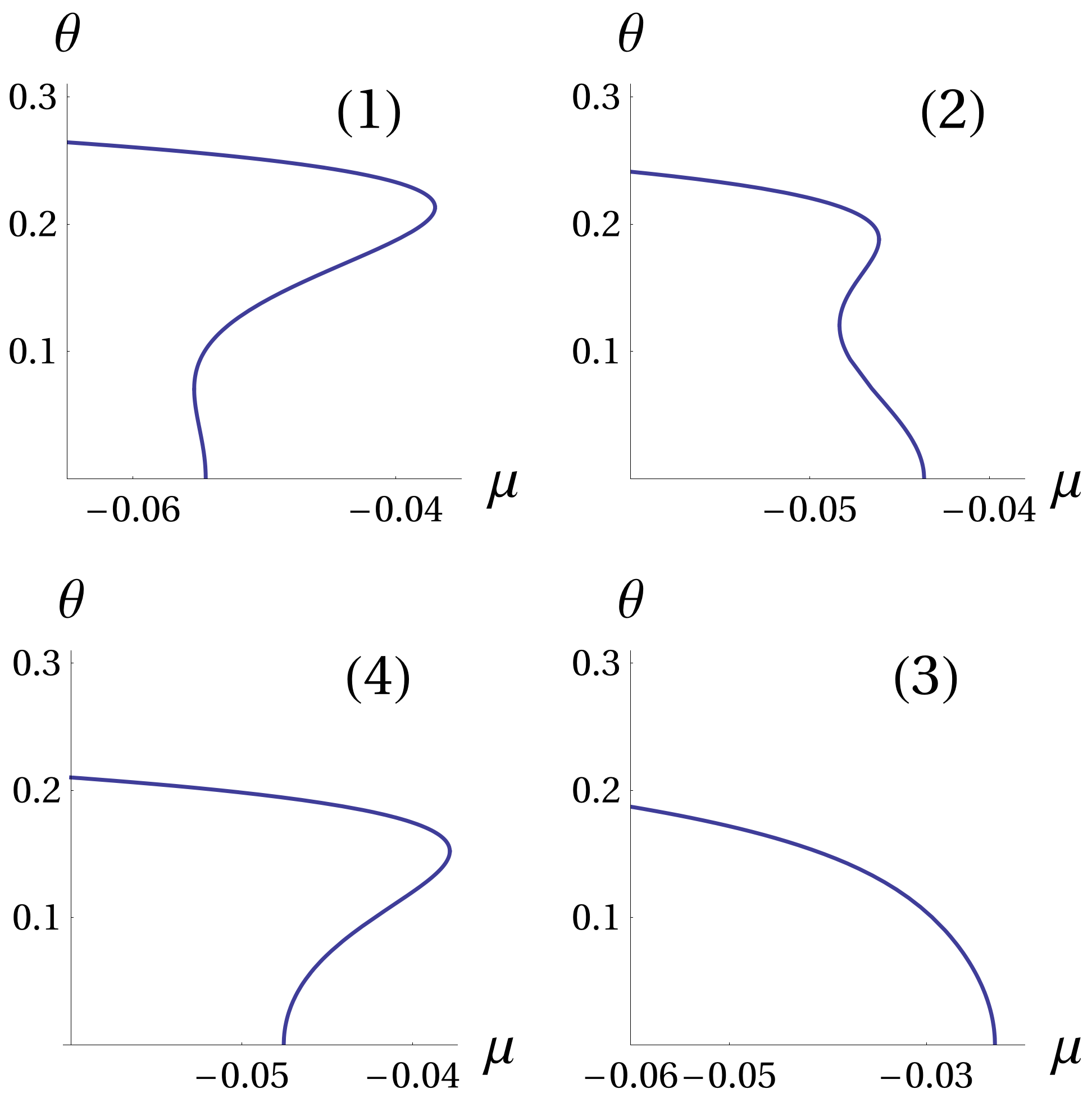}
\caption{Left: Bifurcation diagram in $(\kappa, \tau)$ space with curves defined in \eqref{vtq3}. Right: Persistent diagrams in each region indicate in figure on the left.} \label{fig_dfjust}
\end{center}
\end{figure}

\subsubsection{Cycles connecting Andronov-Hopf points}

As pointed out before, by increasing values of $\kappa$, there are more than one Andronov-Hopf bifurcation point for the same value of delay $\tau.$ This situation is better studied if we consider the alternative form to analyze the bifurcation equation presented in the above section. In particular, considering the frequency as parameter we can study connections between Andronov-Hopf bifurcation points.

We consider the bifurcation equation \eqref{dfbifecjust}, taking $z=\theta^2$ results
\begin{equation}\label{snjust}
 \begin{array}{rcl}
z & = & \displaystyle \frac{\omega-1 +\kappa \sin \beta - \kappa \sin (\beta -\omega \tau) }{2 \gamma},\\
\mu & = & \kappa \cos \beta-\kappa \cos(\beta-\omega \tau) -\frac{\omega-1 +\kappa \sin \beta - \kappa \sin (\beta -\omega \tau) }{\gamma}.
\end{array}
\end{equation}

The conditions to obtain the normal form $\epsilon z + (\mu-\mu_0)^2=0$ are
\begin{equation}
 \begin{array}{l}
 1+\kappa \tau \cos(\beta-\omega_0\tau)=0,\\
 \kappa \tau^2 \sin(\beta-\omega_0\tau)\neq0,\\
 1+\kappa \tau \cos(\beta-\omega_0\tau)+\gamma\kappa\tau\sin(\beta-\omega_0\tau)\neq0,
 \end{array}
\end{equation}
being $\omega_0$ and $\mu_0$ the critical values. To determine points that verify these conditions, as in the previous subsection, we take fixed values of $\beta$ and $\gamma,$ and consider $\kappa$ and $\tau$ as auxiliary parameters.

For fixed values $\beta=\pi/4$ and $\gamma=-10,$ we show in Figure \ref{fig_dfjust2} left, curves of points in $\kappa$-$\tau$ in which are verified the above conditions. The universal  unfolding in this case is
\begin{equation*}
 \epsilon z + (\mu-\mu_0)^2+\epsilon_0=0.
\end{equation*}
In Figures \ref{fig_dfjust2} (a) and (b), we show examples of the persistent diagrams of this type of bifurcation. We consider two fixed values of delay, $\tau=4$ and $\tau=6,$ and the indicates values of $\kappa.$ In particular, in the Figure \ref{fig_dfjust2} (a) we can see the existence of several branches of periodic solutions that connect the equilibrium, by increasing $\kappa$ this branches disappear. The equilibrium and periodic solutions are unstable in the region of space $\kappa$-$\tau$ considered, and for the range of bifurcation parameter $\mu$ observed. We only shows periodic solutions associated with the studied bifurcation, in all cases there was observed another branch of periodic solutions (not shown) that emerge from a different Andronov-Hopf bifurcation point.

\begin{figure}[ht!]
\begin{center}
\begin{minipage}{4cm}
	\includegraphics[width=1\textwidth]{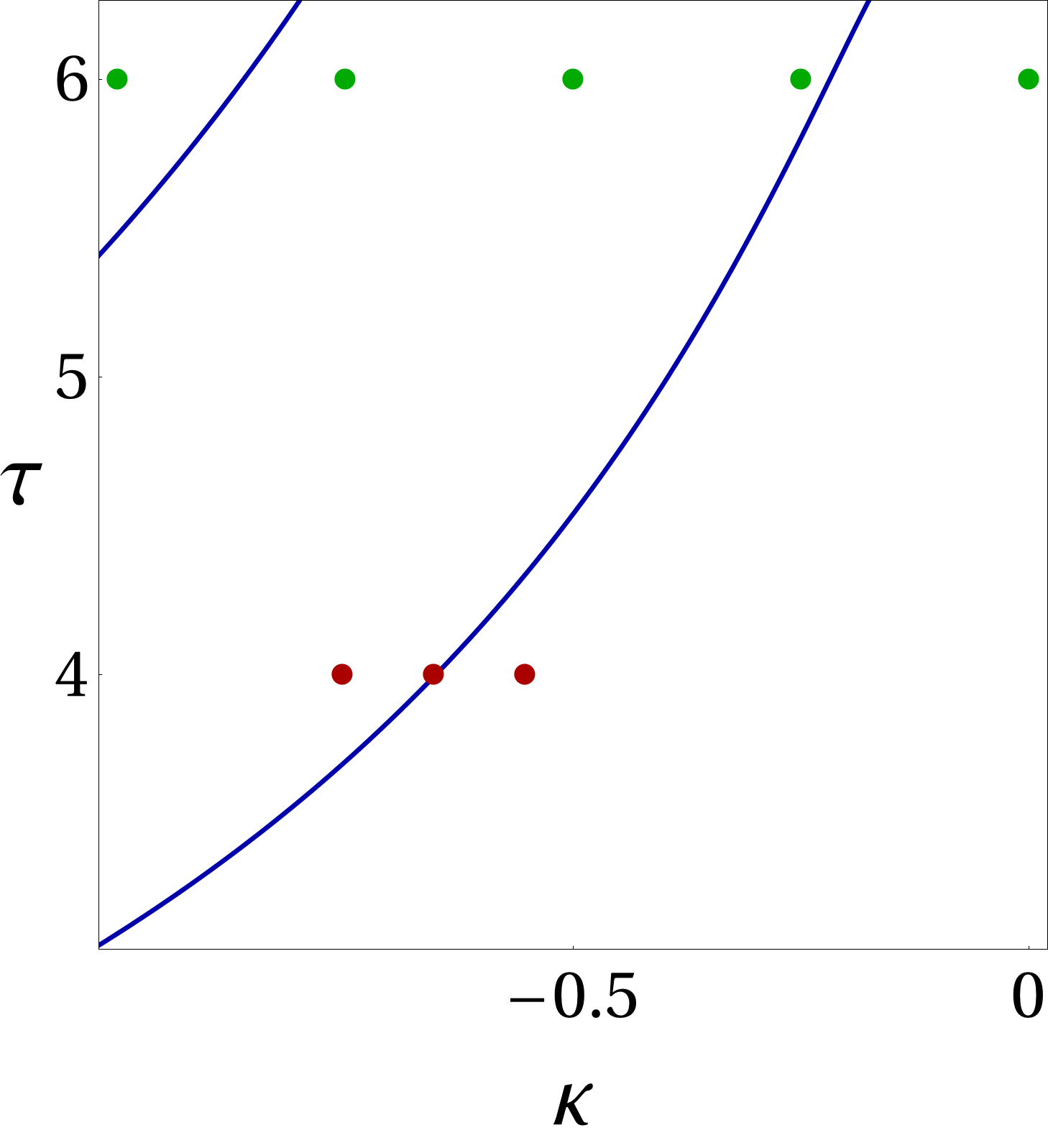}
\end{minipage}\hspace{1.5cm}
\begin{minipage}{7cm}
\centering
	\includegraphics[width=1\textwidth]{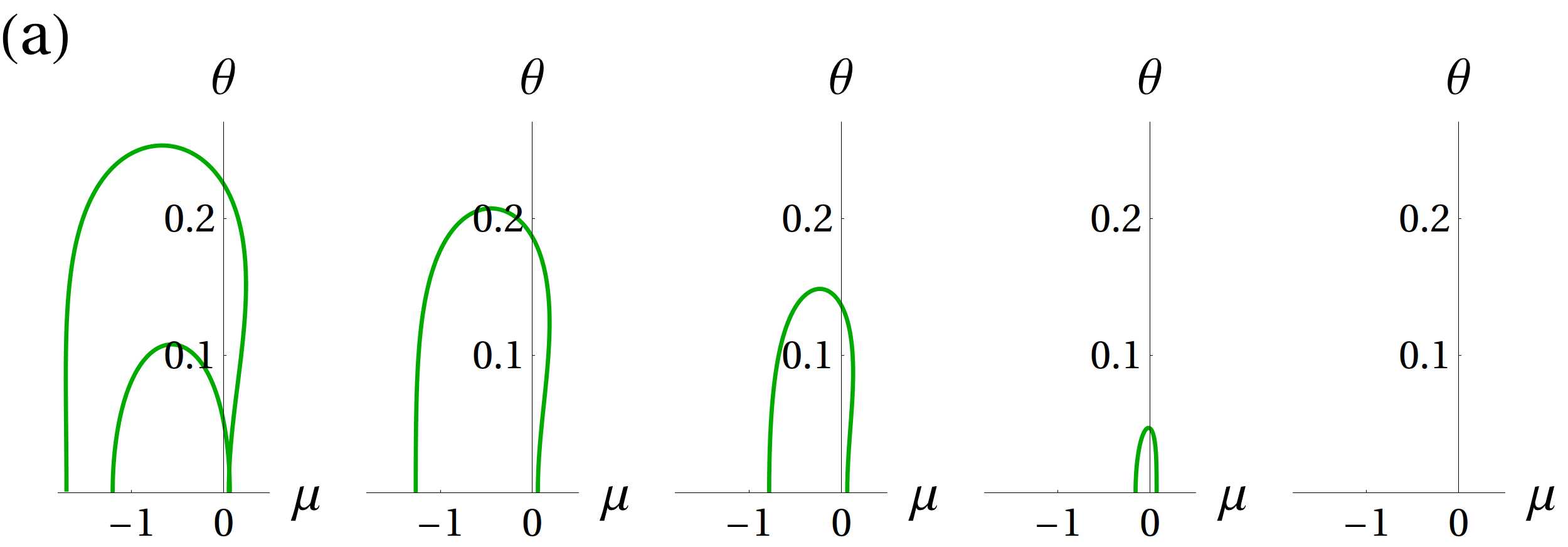}
        \includegraphics[width=1\textwidth]{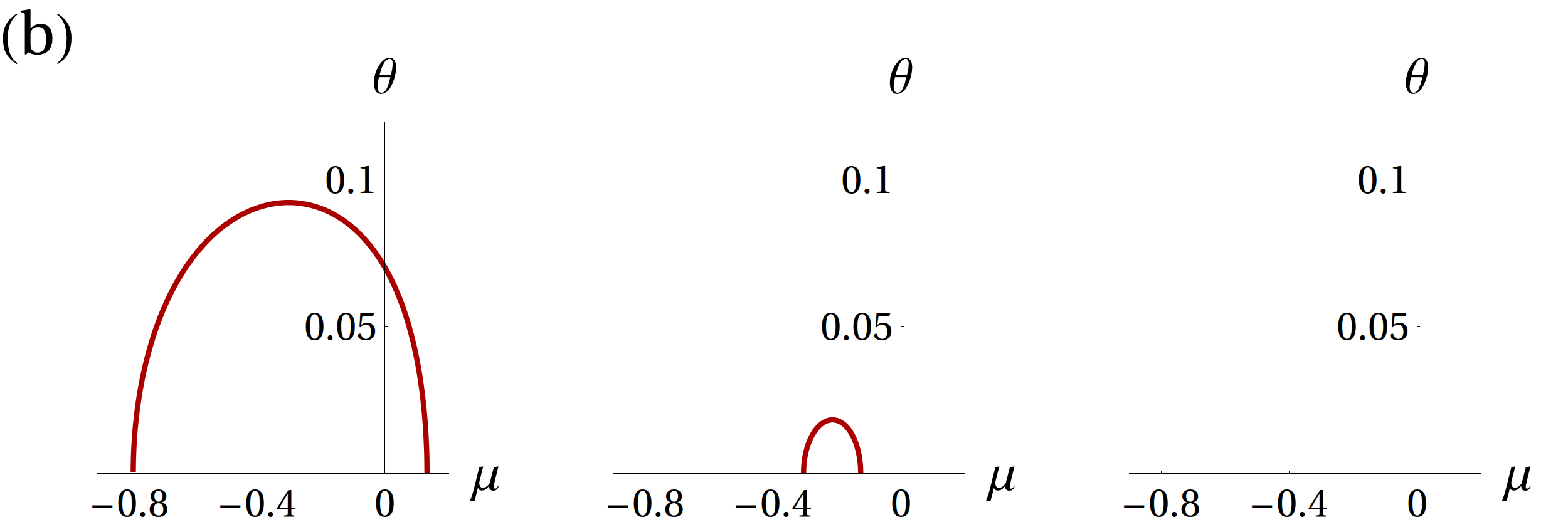}
\end{minipage}
\caption{Left: Curves of points that verify normal form with $p=2$. Right: (a) Diagrams for $\tau=6.$ From left to right we consider $\kappa= -1, -0.75, -0.5, -0.25, 0$. (b) Diagrams for $\tau=4.$ From left to right we consider $\kappa=-.753, -0.653, -0.553.$   } \label{fig_dfjust2}
\end{center}
\end{figure}

Now we want to find values of the parameters that verifies the conditions to obtain the normal form $\epsilon z + (\mu-\mu_0)^3=0.$ We observed that values that verify $\frac{dz}{d\omega}(\omega_0,\rho)=0$ and $\frac{d^2z}{d\omega^2}(\omega_0,\rho)=0,$ also satisfy $\frac{d\mu}{d\omega}(\omega_0,\rho)=0.$ Thus, the bifurcation at these values has a bigger codimension and the existence of periodic solutions associated with it can not be described performing the study developed in the above section.

\subsection{Bautin bifurcation in a first order delay differential equation}

We consider the scalar delay differential equation
\begin{equation}\label{leuk}
x'(t)= -\delta \, x(t) -\beta \left(\frac{x(t)}{1+(x(t))^n}-\frac{k x(t-\tau)}{1+(x(t-\tau))^n}\right),
\end{equation}
where $\delta>0$ is the bifurcation parameter, the auxiliary parameters are $\beta>0,$ $n>0$ and $k>0,$ and $\tau$ is a positive constant delay. This delay differential equation comes from a model of periodic chronic myelogenous leukemia \cite{mackey04}. The system \eqref{leuk} was study using normal forms in \cite{anca13}.

For all parameter values, $x=0$ is an equilibrium of \eqref{leuk}. There is another equilibrium given by
\begin{equation}
\hat{x}=\left(\frac{\beta}{\delta}(k-1)-1\right)^{\frac{1}{n}}.
\end{equation}
A full study of the equilibrium stability was presented in \cite{anca13}. There, it was proved that the equilibrium $\hat{x}$ is well defined and takes significant values for the model only if $1<k<2.$
In the next, we study periodic solutions associated to $\hat{x}.$

Consider the minimal realization $(A_0,A_1,B,C)=(-(\delta+1),0,1,1),$ and the non-linear function
\begin{equation}
g(x(t),x(t-\tau))= - x(t)+ \beta \left(\frac{x(t)}{1+(x(t))^n}-\frac{k x(t-\tau)}{1+(x(t-\tau))^n}\right).
\end{equation}
The linear transfer function results
\begin{equation}
GJ(s,\delta,\tau)= -\left(1 + \frac{\delta\left((n-1)(k-1)\beta-n\delta\right)}{\beta (k-1)^2} (1-k e^{-s\tau})\right)\frac{1}{s+\delta+1}.
\end{equation}
There is only one characteristic function $\hat{\lambda}(s,\delta,\tau)=GJ(s,\delta,\tau).$%

If we define $\beta_1=\delta((n-1)(k-1)\beta-n\delta)/(\beta (k-1)^2),$ the condition in Lemma \ref{lemma1} (i.e., $\hat{\lambda}(i\omega,\delta,\tau)=-1)$, leads to equations
\begin{equation}
\begin{array}{ccl}
\delta & = & \displaystyle \beta_1 \left(1-k \cos \omega \tau\right),\\
\omega & = & \displaystyle  \beta_1 k \sin \omega \tau .\\
\end{array}
\end{equation}
From the above system it results 
\begin{equation}\label{tau}
\omega =\sqrt{(\beta_1 k)^2-(\delta-\beta_1)^2},\quad\text{and}\quad\tau=\frac{1}{\omega}\arccos \left(\frac{\beta_1-\delta}{\beta_1 k}\right).
\end{equation}
For fixed values of the parameters $\beta,$ $k$ and $n,$ the above equation defines a curve of possible Andronov-Hopf points in the $\delta$-$\tau$ space.

We apply the algorithmic process described in Table \ref{mfiter} to obtain the bifurcation equation and to study the existence of periodic solutions. On the one hand, since $\dim(GJ)=1,$ the eigenvectors are $v=w=1,$ which simplify the calculations. But on the other hand, the non-linearities of the function $g$ brings us to long and complicated calculations, even for small order $q.$

Since we are interested in Bautin bifurcations we consider the algorithmic process of order $q=2.$ Unlike the previous example we will not discuss the details of the calculation and we compare the calculations with numeric results. From now one we consider fixed $n=2,$ $\beta=2.5,$ and several fixed values of $k\in (1,2).$

\subsubsection{Bautin bifurcation}
For fixed values of $\tau,$ let $\omega_0$ and $\delta_0$ be critical values such as the hypotheses of Theorem \ref{teoppal} are verified. If the condition \linebreak $\left.\left(\real{\lambda_{\delta}} \imag{\lambda_{\omega}}-\imag{\lambda_{\delta}} \real{\lambda_{\omega}}\right)\right|_{(\omega_0,\delta_0,\tau)}\neq0$ is verified at the considered critical values, then we can calculate the expression of order $q=2$ of $\delta$ as function of $\theta$ and the auxiliary parameters. Using this expression we study the dynamic of the small amplitude periodic solutions of the system \eqref{leuk}.

First we consider $k=1.5.$ For fixed values  $\delta_0=0.1100351576$ and $\tau_0= 4.9740704569,$ and the critic frequency $\omega_0=0.2624792103,$ we have the normal form
\begin{equation}\label{bautinnf}
\delta_2\theta^4-(\delta - \delta_0)=0.
\end{equation}
being $\delta_2=0.0019537383.$ Since the coefficient $\delta_2$ is positive it follows that in a neighborhood of the critical values (considering the universal unfolding \eqref{bautinuu}), the systems has  a branch of stable limit cycles if $\delta_1>0$ and it presents a fold bifurcation of cycles if $\delta_1<0$. From the calculated expression for $\delta_1$ we obtain $\delta_1>0$ ($\delta_1<0$) if $\tau<\tau_0$ ($\tau>\tau_0$).

In Figure \ref{leukk1p5} left, we show the curve of Andronov-Hopf points in the $\delta$-$\tau$ space, the equilibrium is stable in the shaded region below that curve. The black dot represents the Bautin bifurcation point $(\delta_0,\tau_0).$ In Figure \ref{leukk1p5} center and right, we plot persistent bifurcation diagrams. We compare the amplitudes of approximated periodic solutions obtained with algorithmic process of order $2$ (solid line) and the numerical calculations obtained with DDE-BIFTOOL \cite{ddebiftool, ddebmanual} (dotted line). We can observe the good agreement of both results.
\begin{figure}
\begin{center}
 \includegraphics[width=.3\textwidth]{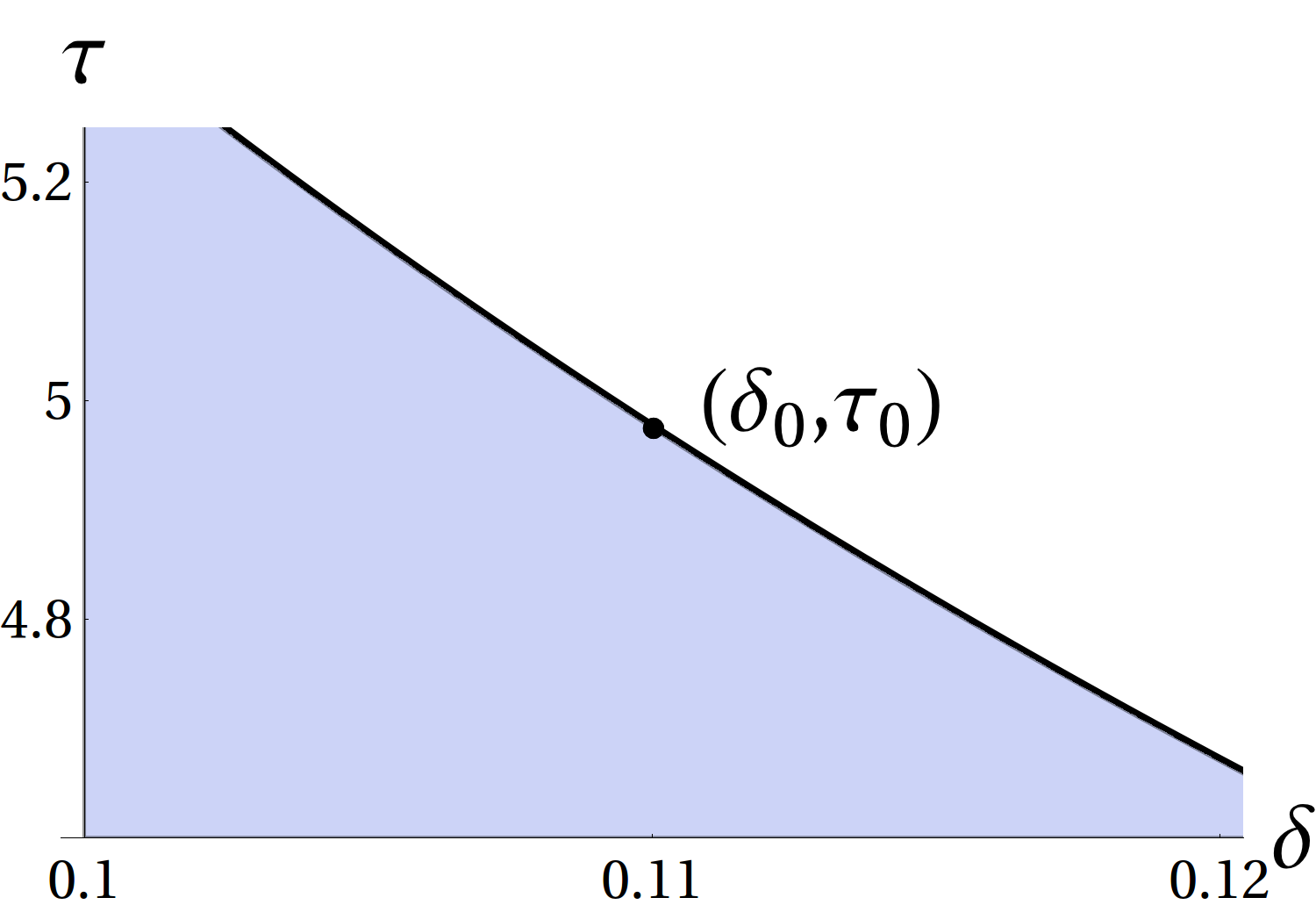}
\includegraphics[width=.3\textwidth]{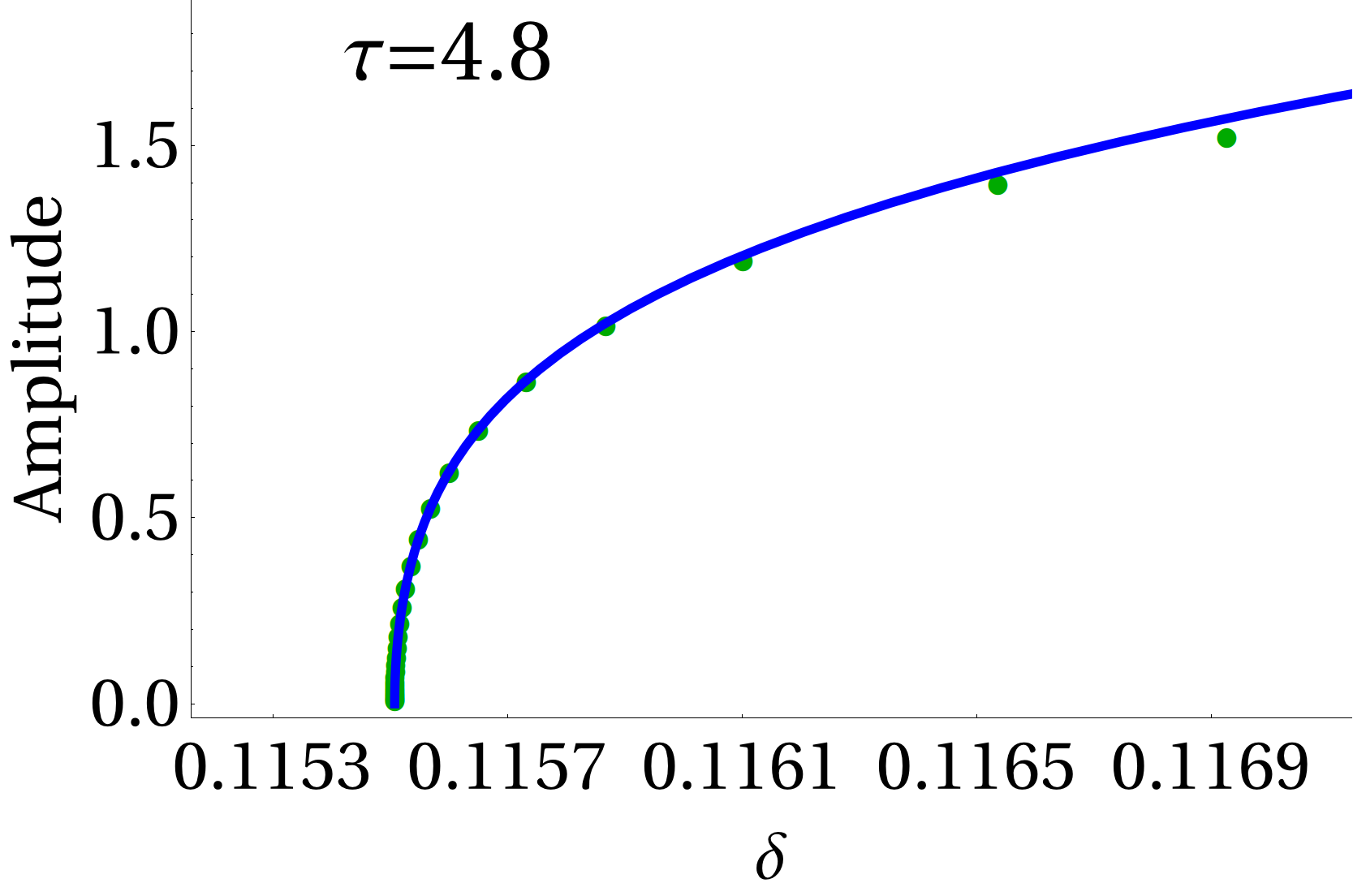}
\includegraphics[width=.3\textwidth]{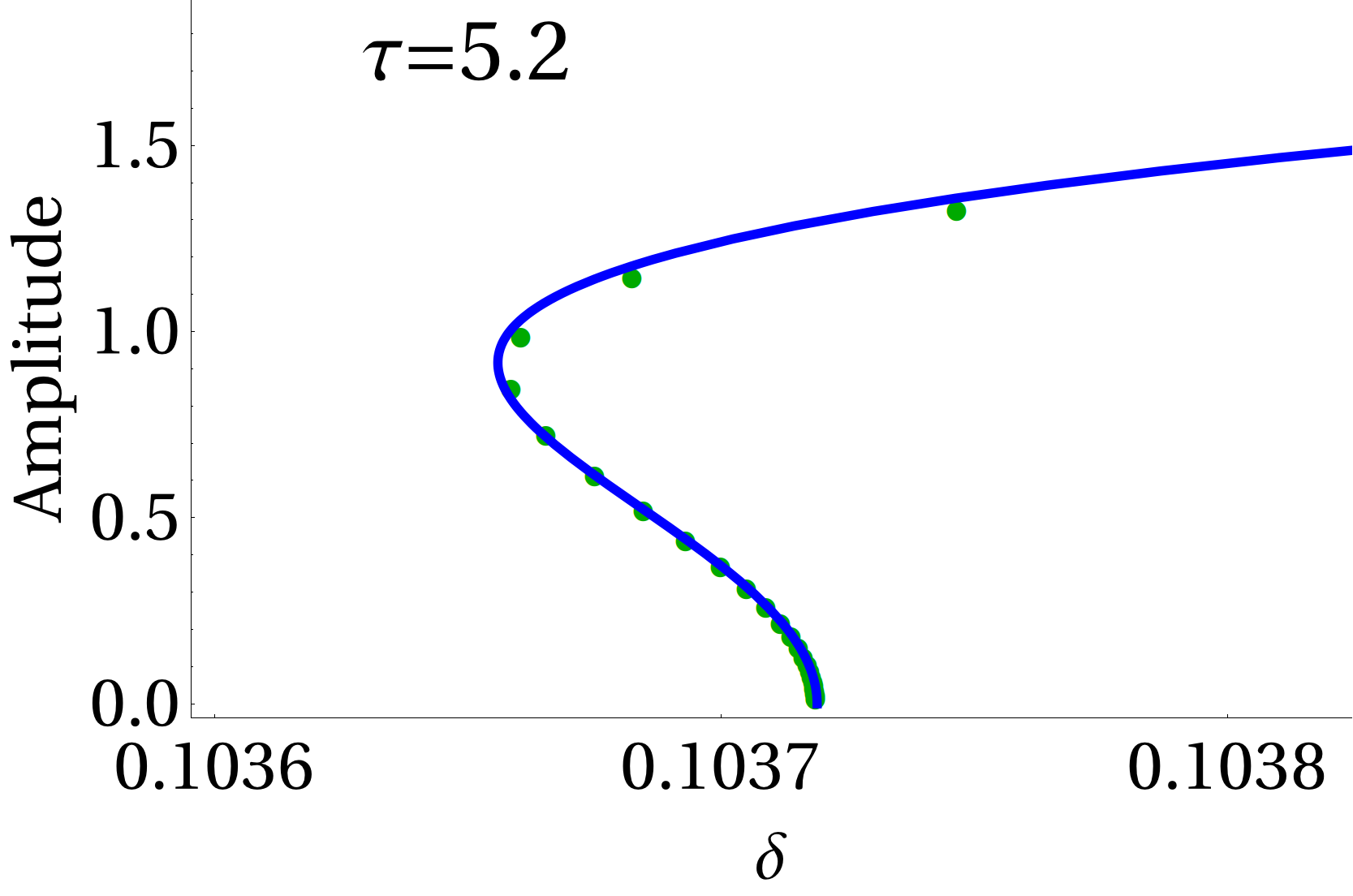}
\caption{Left: Adronov-Hopf point curve. Black dot: Bautin bifurcation point. Center and Right: Amplitudes of periodic solutions using frequency domain (solid line) and DDE-BIFTOOL (dotted line) taking values of $\tau$ indicated in figures.} \label{leukk1p5}
\end{center}
\end{figure}

As another example we take $k=1.01.$ For fixed $\delta_0=0.0023073665$ and $\tau_0=5.301432998$ and the critic frequency $\omega_0=0.0396791,$ we obtain the normal form \eqref{bautinnf} with $\delta_2=0.0000417833.$ Since $\delta_2>0$ the dynamic observed in the universal unfolding  is similar to the previous case (where $k=1.5$). But, as we show in Figure \ref{leukk1p01}, the amplitude of periodic solutions obtained with the algorithmic process (solid line) agree with the amplitude of numeric solutions calculated with DDE-BIFTOOL (dashed line) when the amplitude is small. As we can observe, the fast increment of the amplitude of the numeric solutions indicates the existence of a global bifurcation that brings to a limit cycle of big amplitude (canard explosion). To illustrate this situation we show in Figure \ref{profk1p01} the profiles of the three cycles (with normalized period), obtained numerically for fixed $\tau=5.35$ and $\delta=0.002280345$.

\begin{figure}
\begin{center}
\includegraphics[width=.3\textwidth]{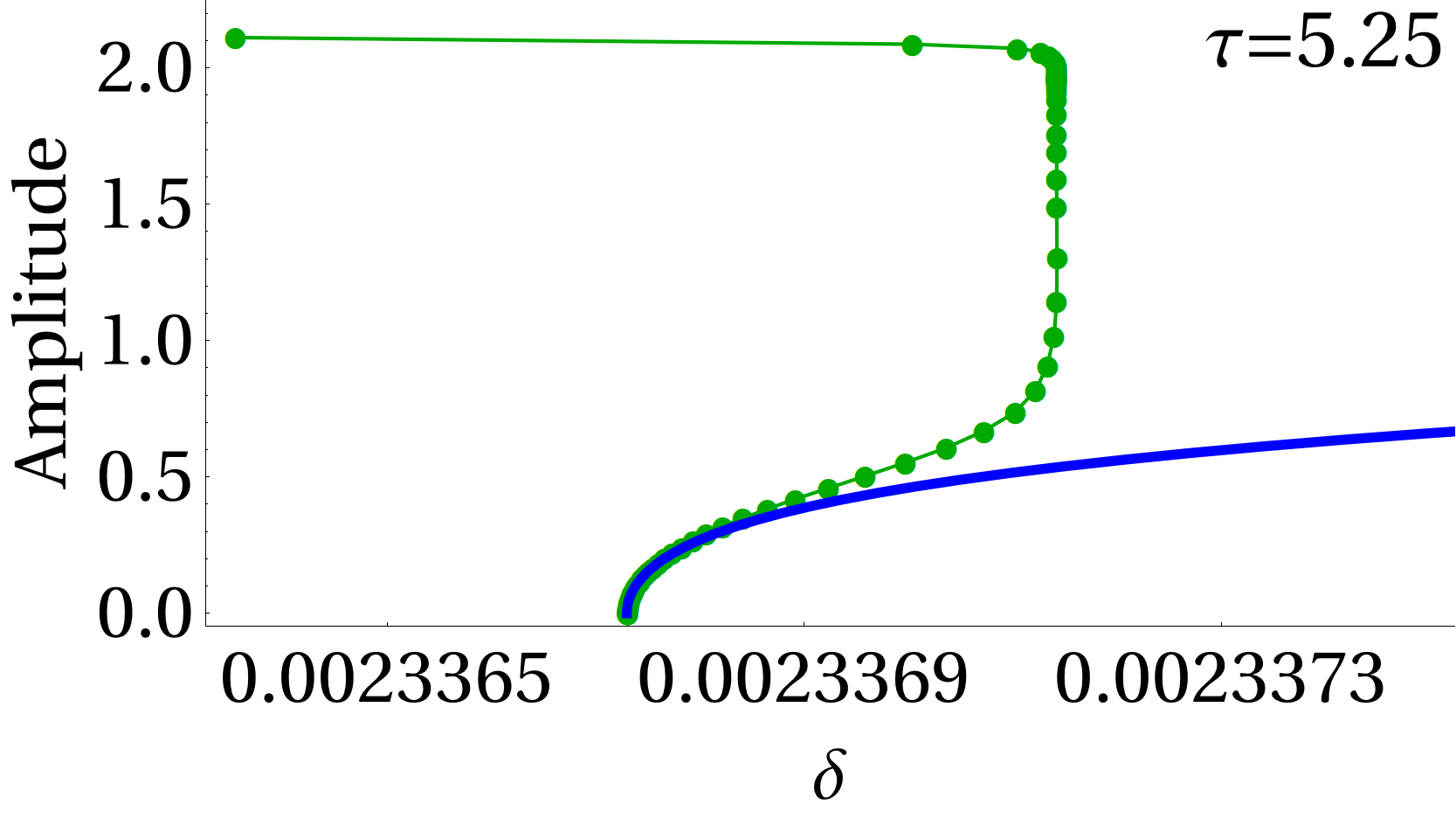}  \hspace{.2cm}
\includegraphics[width=.3\textwidth]{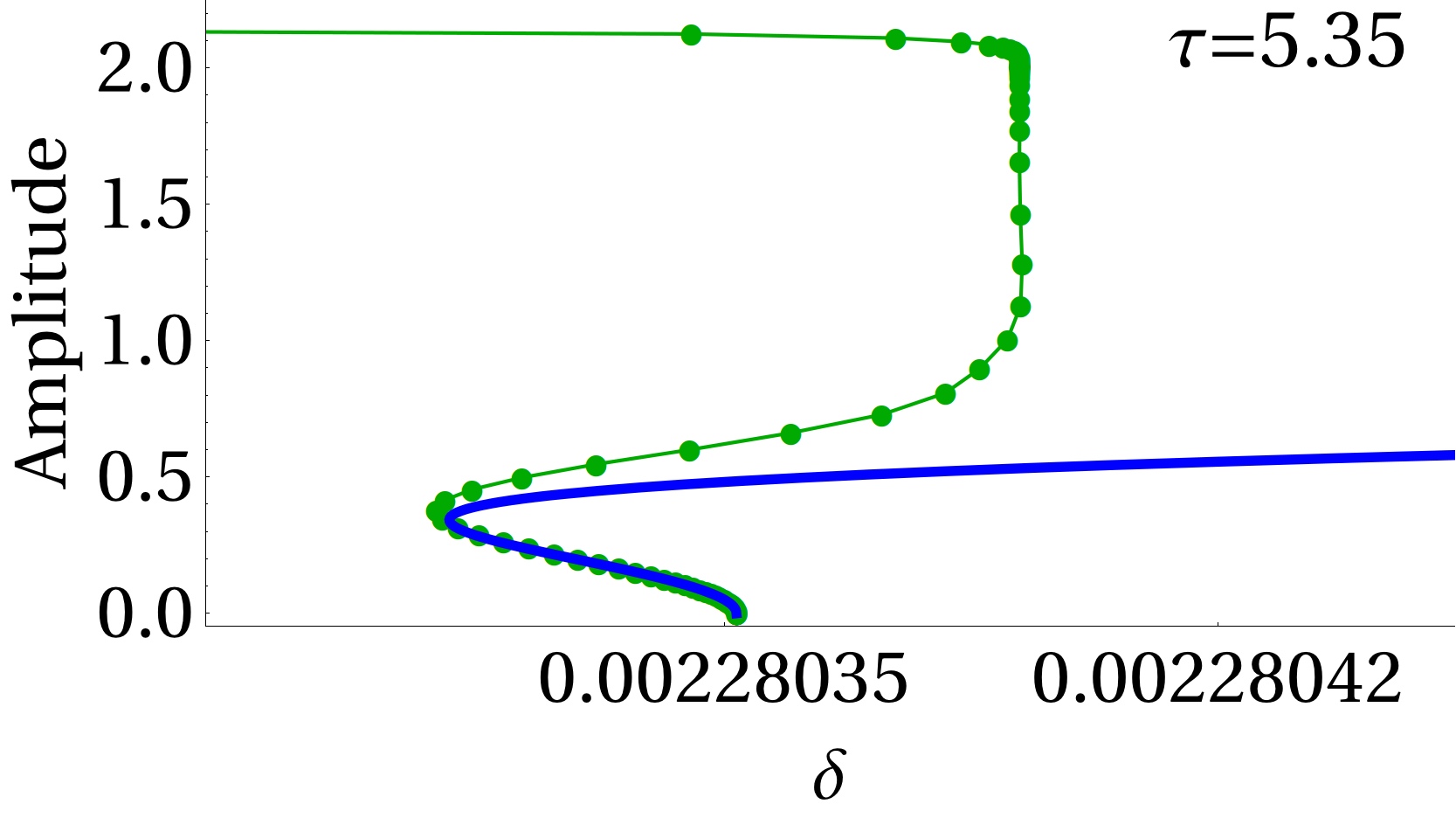}
\caption{Amplitudes of the periodic solutions with frequency domain (solid line) and DDE-BIFTOOL (dotted line) taking the values of $\tau$ indicated in the figures.} \label{leukk1p01}
\end{center}
\end{figure}
\begin{figure}
\begin{center}
\includegraphics[width=.3\textwidth]{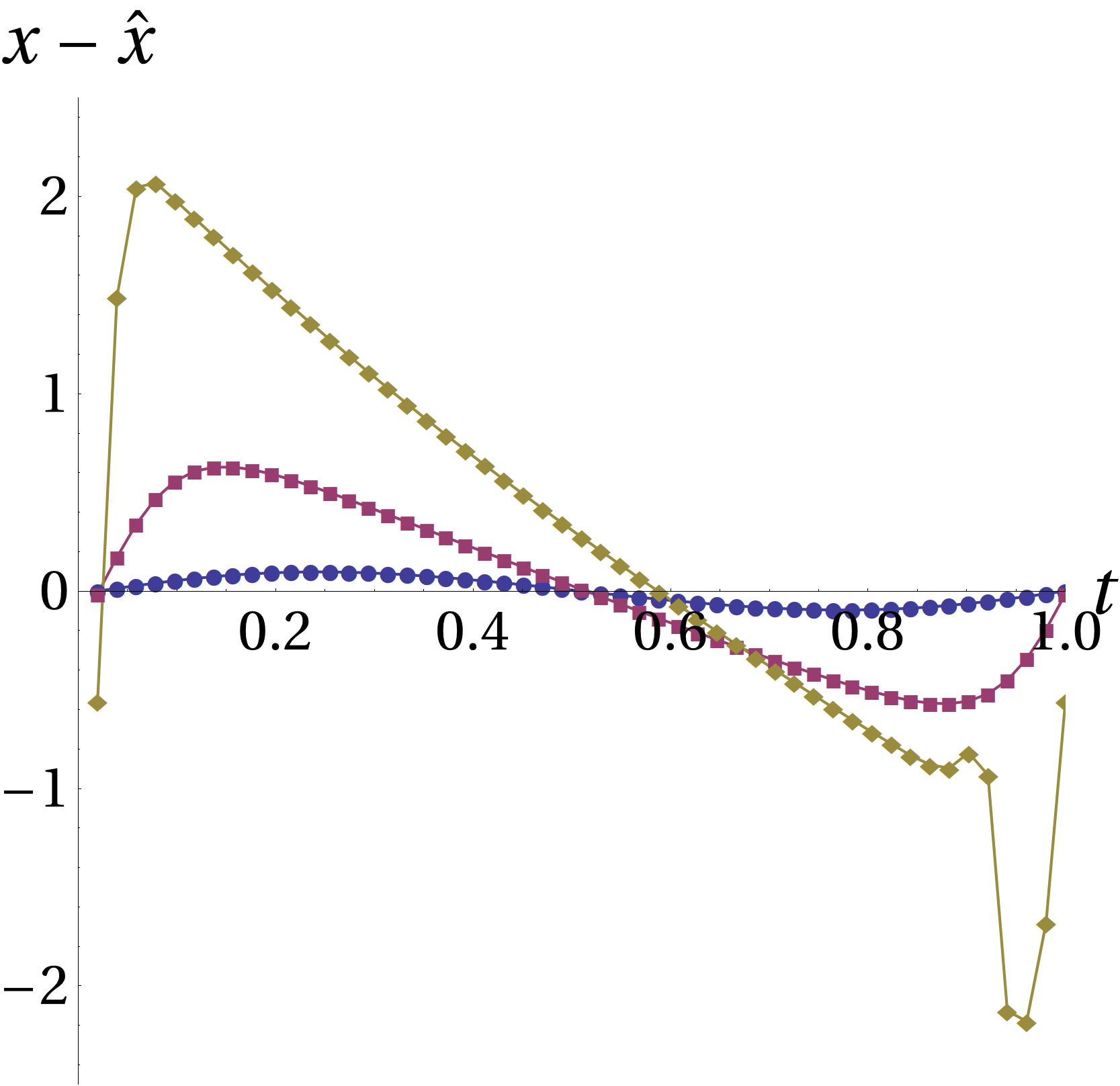}
\caption{Profiles of the numeric solutions for fixed $\delta = 0.002280345$ in Figure \ref{leukk1p01} right.} \label{profk1p01}
\end{center}
\end{figure}

\section{Conclusions}

In this work we present an approach based on frequency domain methods, to study local periodic solutions in delay differential equations which improves the existent results. In the main result of this paper we obtain a bifurcation equation for local oscillations and an expression of periodic solutions up to any fixed order. To calculate these expressions we propose an algorithmic process obtained from the proof of the main theorem. 

The cualitative behaviour of the periodic solutions is studied analyzing the bifurcation equation with singularity theory. We obtain conditions to determine all bifurcations for periodic orbits with codimension less than or equal to two. 

We show the potentiality of the proposed algorithmic approach with two examples. The first one is a time-delayed feedback system which is well-know in  applications of the control theory. This example has a rich dynamic and allows us to show how to determine different scenarios in which multiplicity of periodic solutions is observed. The second example is a first order delay differential equation where we determine a Bautin bifurcation. We compare the numeric calculations with the analytical approximations. We observe that the latter determines with great precision the smallest limit cycles. Besides, the numerics results indicate that this system has a canard-like explosion for some values of the delay.

\subsection*{Acknowledgments}

The work is supported by the Universidad Nacional del Sur (Grant no. PGI 24/L096).


\end{document}